\newtheorem{theorem}{Theorem}[]
\newtheorem{lemma}{Lemma}[]
\newtheorem{statement}{Statement}[]
\theoremstyle{definition}
\newtheorem{remark}[]{Remark}[]
\newtheorem{example}[]{Example}[]
\title{Proper quadrics in the Euclidean $n$-space }
\author[\'A. G.Horv\'ath]{\'Akos G.Horv\'ath}
\address {Department of Geometry \\
Budapest University of Technology and Economics\\
H-1521 Budapest\\
Hungary}
\email{ghorvath@math.bme.hu}
\dedicatory{}
\subjclass[2010]{51N20}
\keywords{analytical Euclidean geometry, confocal quadrics, construction of Chasles, Staude's wire model}
\date{July, 2017}
\begin{document}

\begin{abstract}
In this paper we investigate the metric properties of quadrics and cones of the $n$-dimensional Euclidean space. As applications of our formulas we give a more detailed description of the construction of Chasles and the wire model of Staude, respectively.
\end{abstract}

\maketitle

\section{Introduction}

There are several books and papers either on conics or on quadrics of the $3$-space. The synthetic properties of these are in the common knowledge of mathematics, physics and several other researchers of experimental sciences for a long time. A sort of nice books has written in the nineteenth century on the two and three dimensional cases, respectively. In the contemporary literature we can find nice and more general projective building up of quadrics however (unfortunately) without a systematic investigation of those metric properties which are well-known in the three dimensional case.  As an example I would like to mention here the most known book of Berger \cite{berger} in which we can find two chapters (Chapter 14 and Chapter 15) on the projective and affine properties of quadrics and only one really $n$-dimensional metric theorem (the Theorem of Apollonius in the paragraph 15.6.2). The same situation can be found in the most paper, the metric geometry of proper $n$-dimensional quadric had not been investigated. Using the standard analytic geometry of the $n$-space we would like to fill this gap in the literature.

As applications of our formulas we give a more detailed description of the construction of Chasles and the wire model of Staude, respectively.

\section{Quadrics in $\mathbb{R}^n$}

\subsection{The definition of quadrics} A non-degenerated central proper surface of second order is called by \emph{quadric} in this paper. In canonical form it can be written as the set of points $x=(x_1,\ldots, x_n)^T$ holding the equality
\begin{equation}\label{eq:quadric}
\sum\limits_{i=1}^n\varepsilon_i\frac{x_i^2}{a_i^2}=1,
\end{equation}
with $a_n\leq a_{n-1}\leq \ldots \leq a_1$ and $\varepsilon_i\in\{\pm 1\}$. Clearly, if $x'$ is any point of the quadric above a normal vector at $x'$ is equal to
\begin{equation}\label{eq:normal}
n(x')=\left(\varepsilon_1\frac{x'_1}{a_1^2},\ldots, \varepsilon_n\frac{x'_n}{a_n^2}\right)^T \mbox{ with norm square } |n(x')|^2=\sum\limits_{i=1}^{n}\frac{(x'_i)^2}{a_i^4}
\end{equation}
and its tangent hyperplane at this point is the set of points $x$ with
$$
0=\langle n(x'),x-x'\rangle=\sum\limits_{i=1}^{n}\varepsilon_i\frac{x_ix'_i-{x'_i}^2}{a_i^2}=\sum\limits_{i=1}^{n}\varepsilon_i\frac{x_ix'_i}{a_i^2}-1
$$
We use the name \emph{ellipsoid} for a quadric when $\varepsilon_i=1$ hold for all $i$. (In the case when $\varepsilon=-1$ for all $i$ the quadric is the empty set.)

\subsection{Conjugacy with respect to an ellipsoid and the theorem of Apollonius}

Two diameters of a given quadric are \emph{conjugate diameters}, if they are conjugate diameters of that $2$-dimensional conic which is the intersection of the quadric by the $2$-plane is spanned by the diameters. (Two directions are conjugate to each other if they are parallels to a pair of conjugate diameters.) This means that the line parallel to the first diameter and going through the end point of the second one is a tangent line of the quadric at the investigated point. (In this section we consider ellipsoids and all the corresponding signs are plus.) The analytic condition of conjugacy can be determined on the following way. Let denote $e=(e_1,\ldots, e_n)$ and $f=(f_1,\ldots,f_n)$ the investigated directions. They are conjugate to each other and let $p_e$ be that real number for which  the point $p_ee$ lie on the quadric of form (\ref{eq:quadric}). Then we have
\begin{equation}\label{eq:conjugatediam}
0=\langle n(p_ee),f\rangle=p_e\sum\limits_{i=1}^{n}\frac{e_if_i}{a_i^2}.
\end{equation}
A \emph{complete system of conjugate semi-diameters} $\{x^1,\ldots,x^n\}$ by definition has $n$ pairwise conjugate elements. For an ellipsoid the above conditions can be collected in a matrix form, too. Let denote by $X$, $X_A$ and $A$ the matrices
$$
X=\left(
  \begin{array}{ccc}
    x^1_1 & \cdots & x^n_1 \\
    \vdots & \cdots & \vdots \\
    x^1_n & \cdots & x^n_n \\
  \end{array}
\right), \quad
X_A=\left(
      \begin{array}{ccc}
        \frac{x^1_1}{a_1} & \cdots & \frac{x^1_n}{a_n} \\
        \vdots & \cdots & \vdots \\
        \frac{x^n_1}{a_1} & \cdots & \frac{x^n_n}{a_n} \\
      \end{array}
    \right)
\mbox{ and }
A=\left(
    \begin{array}{ccccc}
      a_1 & 0 & \cdots & 0 & 0\\
      0 & a_2 & 0 &\cdots & 0 \\
      0 & 0& a_3 & \cdots & 0 \\
      \vdots & \vdots &\vdots & \vdots &\vdots \\
      0 & 0 & \cdots & 0 & a_n \\
    \end{array}
  \right),
$$
respectively.
Clearly $X_A\cdot A=X$ and $X_A$ by the above assumption is an orthogonal matrix. From this follows that
$$
X^TX=\left(X_A\cdot A\right)^T\left(X_A\cdot A\right)^T=A^TA=A^2
$$
implying that the respective traces of the two sides are equal to each other. This means that
\begin{equation}\label{eq:squaresumofthediam}
|x^1|^2+|x^2|^2+\ldots +|x^n|^2=a_1^2+\ldots +a_n^2.
\end{equation}
We get that \emph{the sum of squares of a complete system of conjugate semi-diameters is a constant}. The orthogonality of the matrix $X_A$ immediately implies that its determinant is $1$, hence we also have:
\begin{equation}\label{eq:volumeofpar}
\det X =\det (X_A\cdot A)=\det A=a_1\cdots a_n,
\end{equation}
meaning that \emph{the volume of the parallelepiped is spanned by the semi-diameters of a complete conjugate system is a constant}.

The above two observations are special cases of the following theorem:

\begin{theorem}[Theorem of Apollonius]\label{thm:apollonius}
An ellipsoid in the Euclidean $n$-space has $n$ associated scalars invariants with respect to any affinity of the space. It can be defined by the formulas:
\begin{equation}\label{eq:invariants}
v_k:=\sum\limits_{i_1<i_2<\cdots <i_k}\det G[x^{i_1},\cdots,x^{i_k}],
\end{equation}
where $\{x^1,\ldots,x^n\}$ is a complete conjugate system of semi-diameters and $G[x^{i_1},\cdots,x^{i_k}]$ is the Gram matrix of the vector system $x^{i_1},\cdots,x^{i_k}$.
\end{theorem}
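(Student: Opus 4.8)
The plan is to read the numbers $v_k$ as the coefficients of the characteristic polynomial of the full Gram matrix $G:=X^TX$, and then to exploit the orthogonality of $X_A$ to compute its spectrum explicitly. First I would observe that for each multi-index $i_1<\cdots<i_k$ the matrix $G[x^{i_1},\ldots,x^{i_k}]$, whose entries are the inner products $\langle x^{i_p},x^{i_q}\rangle$, is exactly the principal $k\times k$ submatrix of $G=X^TX$ obtained by retaining the rows and columns indexed by $\{i_1,\ldots,i_k\}$, since $(X^TX)_{pq}=\langle x^p,x^q\rangle$. Hence $v_k$ is nothing but the sum of all principal $k\times k$ minors of $G$.

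Next I would invoke the standard algebraic identity that this sum is a similarity invariant: the multilinear expansion of the determinant gives
\[
\det(\lambda I-G)=\sum_{k=0}^{n}(-1)^k\Big(\sum_{i_1<\cdots<i_k}\det G[x^{i_1},\ldots,x^{i_k}]\Big)\lambda^{n-k},
\]
so $v_k$ equals, up to the sign $(-1)^k$, the coefficient of $\lambda^{n-k}$ in the characteristic polynomial of $G$; equivalently $v_k$ is the $k$-th elementary symmetric function $e_k$ of the eigenvalues of $G$. In particular $v_k$ is determined by the spectrum of $G$ alone, so it remains only to find that spectrum.

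The decisive step is therefore the computation of the eigenvalues. Writing $(X_A)_{pi}=x^p_i/a_i$, so that $x^p_i=a_i(X_A)_{pi}$, I obtain
\[
G_{pq}=\sum_{i=1}^n x^p_i x^q_i=\sum_{i=1}^n a_i^2\,(X_A)_{pi}(X_A)_{qi}=\big(X_A A^2 X_A^{T}\big)_{pq},
\]
that is $G=X_A A^2 X_A^{T}=X_A A^2 X_A^{-1}$, the last equality by the orthogonality of $X_A$. Thus $G$ is orthogonally similar to $A^2=\mathrm{diag}(a_1^2,\ldots,a_n^2)$, so its eigenvalues are $a_1^2,\ldots,a_n^2$ for \emph{every} complete conjugate system, whence
\[
v_k=e_k(a_1^2,\ldots,a_n^2)=\sum_{i_1<\cdots<i_k}a_{i_1}^2\cdots a_{i_k}^2 .
\]
This value is governed only by the semi-axes and is independent of the conjugate system chosen; since the complete conjugate systems are precisely the images of the principal axes under the affinities preserving the ellipsoid, this is exactly the asserted invariance, and the extreme cases $k=1$ and $k=n$ return \eqref{eq:squaresumofthediam} and the square of \eqref{eq:volumeofpar}.

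I expect the only genuinely delicate point to be the transpose bookkeeping that yields the orthogonal similarity $G=X_A A^2 X_A^{-1}$ rather than the literal equality $G=A^2$ (which would falsely force the semi-diameters to be mutually orthogonal); once this similarity is set up correctly, everything else is the purely formal identity relating principal minors to characteristic-polynomial coefficients, which uses no geometry of the ellipsoid at all.
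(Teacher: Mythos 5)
Your proof is correct and follows essentially the same route as the paper: both identify $v_k$ with the sum of the principal $k\times k$ minors of the Gram matrix $G=X^TX$, hence with the coefficients of its characteristic polynomial, and then show that this polynomial coincides with that of $A^2=\mathrm{diag}(a_1^2,\ldots,a_n^2)$, giving $v_k=e_k(a_1^2,\ldots,a_n^2)$. Your transpose bookkeeping is in fact slightly more careful than the paper's, which asserts $X^TX=A^2$ where only the orthogonal similarity $G=X_AA^2X_A^{-1}$ holds; this does not affect the conclusion, since similar matrices share their characteristic polynomial.
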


\begin{proof} Consider the matrix $X^TX+tI$ where $t$ is a real number and $I$ is the identity matrix. Then we have
$$
\det\left[(X^TX+tI)\right]=\det[A^2+tI]=\sum\limits_{k=0}^nt^{n-k}\prod_{i_1<\ldots<i_k}a_{i_1}^2\cdots a_{i_k}^2.
$$
On the other hand, if $X^TX=:G=[g_{i,j}]$ is the Gram matrix of $X$, and $G_{i_1,\ldots,i_k}$ denotes its $k$-minor defined by the elements in the crossing of those rows and columns which correspond to the multi-index $\{i_1,\ldots,i_k\}$, than using an inductive argument and the rule on the addition of determinants we can prove that
$$
\det\left[(G+tI)\right]=\sum\limits_{^k=0}^n t^{n-k} \sum\limits_{i_i<\ldots <i_k}\det[G_{i_1,\ldots,i_k}].
$$
These two formulas imply that for all sets of complete conjugate systems hold the equality
$$
v_k=\prod_{i_1<\ldots<i_k}a_{i_1}^2\cdots a_{i_k}^2
$$
which gives our statement.
\end{proof}

Two points $x$ and $y$ are \emph{conjugate with respect to an ellipsoid} if the directions $x$ and $y-x$ are conjugate to each other. This means by condition (\ref{eq:conjugatediam}) that the equality  $x^TA^{-2}y=1$ hold with the diagonal matrix $A^{-2}$. Clearly, it is a symmetric relation.
Let $\langle H, x\rangle=\sum\limits_{i=1}^nh_ix_i=1$ is a given hyperplane does not contain the origin. The \emph{pole} of this hyperplane with respect to the ellipsoid is such a point of the space which is conjugate to an arbitrary point of the hyperplane. If $\xi$ is its pole with respect to the ellipsoid then for a point $x\in H$ we have two equations for all $x\in H$:
$$
\langle H, x\rangle=1 \mbox{ and } 1=x^TA^{-2}\xi=\sum\limits_{i=1}^n \frac{x_i\xi_i}{a_i^2}.
$$
Now immediately can be calculated the coordinates of the pole:
\begin{equation}\label{eq:poleofaplane}
\xi_i=h_ia_i^2 \quad i=1,\ldots, n.
\end{equation}

\subsection{The system of confocal quadrics}

Consider an ellipsoid and its confocals in the common canonical form defined by the equalities
\begin{equation}\label{eq:confocals}
\sum\limits_{i=1}^n\frac{x_i^2}{a_i^2-\lambda}=1,
\end{equation}
where we have $0<a_n<a_{n-1}<\ldots <a_1$ and $\lambda\in\mathbb{R}$. We denote by $\mathcal{C}(\lambda)$ this system of quadric and $C(\lambda)$ the quadric corresponding to the parameter $\lambda$, respectively.

Clearly, if $\lambda=a_k^2$ then we should consider only such points as a point of ${C}(\lambda)$, for which the assumption $x_k=0$ holds and the other coordinates satisfy the equation
\begin{equation}\label{eq:focalqudric}
\sum\limits_{\substack{i=1 \\ i\ne k}}^n\frac{x_i^2}{a_i^2-a_k^2}=1.
\end{equation}
Hence the above $n-1$-dimensional quadric can be considered as the limit of the pencil of confocal quadrics by the assumption $\lambda \rightarrow a_k$. Denote by $C_k$ this surface which we call the \emph{$k$-th focal quadric} of the system of quadric $\mathcal{C}(\lambda)$. There are precisely $n-1$ focal quadric with real points and one of them (when $\lambda=a_1^2$) does not contain real points.
It is clear that the $n$-th focal quadric $C_n$ is an ellipsoid.
First we detail the basic properties of the confocal quadrics.

\subsubsection{Confocal quadrics through a given point of an ellipsoid} Observe that a point $x'=(x'_1,\ldots, x'_n)^T$ of the space belongs to a system of confocal (to the given one) quadrics of cardinality $n$ with pair-wise orthogonal elements. In fact, we can arrange the original equation to the form
\begin{equation}\label{eq:flambdafunc}
0=\sum\limits_{i=1}^n{x'_i}^2\prod\limits_{j\ne i}(a_j^2-\lambda)-\prod\limits_{i=1}^n(a_i^2-\lambda)=:f(\lambda).
\end{equation}
Since $f(-\infty)=-\infty ,\, f(a_n)>0, \, f(a_{n-1})<0, \ldots $ by Role's theorem there are $n$ distinct roots $\lambda^j$ of $f(\lambda)$ for the various $\lambda$. Every solutions define a system of semi-axes denoted by $\{a_i^j \, i=1,\ldots n\}$ where $j=1,\ldots ,n$ and the corresponding system of signs $\varepsilon(i,j)$ before the squared length of the semi-axis $a_i^j$. (If we choose a point on the ellipsoid determining the confocal system  we assume that $a_i^1:=a_i$). For brevity, in this paper  the values $\varepsilon(i,j)(a_i^j)^2$ we denote by $(a_i^j)^2$, our notation contains that sign which have to occur in the canonical form of the corresponding quadric. Observe that by the fixed distance of the squared lengths of the corresponding semi-axes in a confocal system, these are create a monotone decreasing sequence for a fixed $j$. On the other hand we also assume that the values $a_1^j$ gives a monotone decreasing sequence in $j$, namely the inequalities
$(a_1^1)^2 > (a_1^2)^2> \ldots > (a_1^n)^2$ are hold, too.

By these squared-lengths we can determine the coordinates of the common point $x'$. Let denote by $h_i^2:=a_1^2-a_i^2$ for $i>1$ and $h_1=0$, and search the possible major semi-axis $a'_1$ of a confocal quadric through $x'$ from the polynomial equation of order $n$:
$$
\sum\limits_{i=1}^n\frac{{x'_i}^2}{{a'_1}^2-h_i^2}=1,
$$
or equivalently
\begin{equation}\label{eq:confonapoint}
0=\left({a'_1}^2\right)^n-\left({a'_1}^2\right)^{n-1}\left(\sum\limits_{i=1}^n{x'_i}^2+\sum\limits_{i=1}^nh_i^2\right)\pm \ldots (-1)^n{x'_1}^2\prod\limits_{i=2}^nh_i^2
\end{equation}
Let denote by $\left(a_1^j\right)^2$ the $j$-th root of the equation (\ref{eq:confonapoint}). It is always positive (we have an ellipsoid solution). (Analogously, by parity of reasoning, we might have taken $a'_i$ for our unknown and in this case the notation $\left(a_i^j\right)^2$ denote the corresponding root of the examined equation independently from the fact that it is positive or negative. If the root is negative we consider the corresponding term in the canonical form of the quadric with negative sign.) Using Vieta's formulas we get
$$
{x'_1}^2=\frac{\prod\limits_{j=1}^n\left(a_1^j\right)^2}{\prod\limits_{i=2}^nh_i^2}=\frac{\prod\limits_{j=1}^n\left(a_1^j\right)^2} {\prod\limits_{i=2}^n\left(a_1^2-a_i^2\right)}=\frac{\prod\limits_{j=1}^n\left(a_1^j\right)^2} {\prod\limits_{i=2}^n\left(\left(a_1^1\right)^2-\left(a_i^1\right)^2\right)}
$$
and generally
\begin{equation}\label{eq:coordwithconfoc}
{x'_i}^2=\frac{\prod\limits_{j=1}^n\left(a_i^j\right)^2} {\prod\limits_{\substack{i=1 \\ i\ne j}}^n\left(\left(a_j^j\right)^2-\left(a_i^j\right)^2\right)} \mbox{ for all } i.
\end{equation}
The second term in (\ref{eq:confonapoint}) is the sum of the roots of the equation. From this immediately follows the connection
$$
\left(\sum\limits_{i=1}^n{x'_i}^2+\sum\limits_{i=1}^nh_i^2\right)=\sum\limits_{j=1}^n\left(a_1^j\right)^2,
$$
implying again an important equality:
\begin{equation}\label{eq:lengthbytheconfaxes}
|x'|^2=\sum\limits_{i=1}^n{x'_i}^2=\sum\limits_{j=1}^n\left(a_j^j\right)^2,
\end{equation}
since  $\left(a_1^j\right)^2-\left(a_1^2-a_j^2\right)=\left(a_1^j\right)^2-\left(\left(a_1^j\right)^2-\left(a_j^j\right)^2\right)=\left(a_j^j\right)^2$.

Observe that \emph{the confocal quadrics through the given point $x'$ are pairwise orthogonal at this point}. We have to prove that for $j\ne k$
$$
0=\sum\limits_{i=1}^{n}\frac{\left(x'_i\right)^2}{\left(a_i^j\right)^2\left(a_i^k\right)^2}.
$$
But we have
$$
1=\sum\limits_{i=1}^{n}\frac{\left(x'_i\right)^2}{\left(a_i^j\right)^2}=\sum\limits_{i=1}^{n}\frac{\left(x'_i\right)^2}{\left(a_i^k\right)^2},
$$
if we subtract one of these equations from the other, we get
\begin{equation}\label{eq:onorthog}
0=\sum\limits_{i=1}^{n}\left(\left(a_i^k\right)^2-\left(a_i^j\right)^2\right)\frac{\left(x'_i\right)^2}{\left(a_i^j\right)^2\left(a_i^k\right)^2}= \left(\left(a_1^k\right)^2-\left(a_1^j\right)^2\right)\sum\limits_{i=1}^{n}\frac{\left(x'_i\right)^2}{\left(a_i^j\right)^2\left(a_i^k\right)^2}
\end{equation}
equivalently with our statement. Therefore where $n$ confocal quadrics intersect, each tangent hyperplane cuts the other perpendicularly, and the tangent hyperplane to any one contains the normals to the other $n-1$.

\subsubsection{On the central sections of a quadric}\label{sssec:centrals}

First we prove that if a plane be drawn through the centre parallel to any tangent hyperplane to a quadric, the axes of the section made by that plane are parallel to the normals to the $n-1$ confocals through the point of contact. It has been proved that the latter directions are pairwise orthogonal to each other, it remains to prove only that these directions also conjugate directions with respect to the intersection quadric. Using (\ref{eq:normal}) and the assumption (\ref{eq:conjugatediam}) we have to prove that for $j\ne k$ holds
$$
0=\sum\limits_{i=1}^{n}\frac{\left(x'_i\right)^2}{\left(a_i^1\right)^2\left(a_i^j\right)^2\left(a_i^k\right)^2}= \sum\limits_{i=1}^{n}\frac{1}{\left(\left(a_i^k\right)^2-\left(a_i^j\right)^2\right)}\left(\frac{\left(x'_i\right)^2}{\left(a_i^1\right)^2\left(a_i^j\right)^2}- \frac{\left(x'_i\right)^2}{\left(a_i^1\right)^2\left(a_i^k\right)^2}\right).
$$
But for all $i$ the equality $\left(\left(a_1^k\right)^2-\left(a_1^j\right)^2\right)=\left(\left(a_i^k\right)^2-\left(a_i^j\right)^2\right)$ is hold thence the right hand side is equal to the subtraction of the two equalities of form (\ref{eq:onorthog}).

It can also be determined \emph{the lengths of the squared semi-axes of the central section} parallel to the tangent hyperplane at the point $x'$. From (\ref{eq:quadric}) we get that the length $\rho^1_j$ of the radius vector of the given quadric (which is the first confocal ones) parallel to the normal of the $j$-th confocal  holds the equality
$$
1=\frac{\left(\rho^1_j\right)^2}{|n_j(x')|^2}\sum\limits_{i=1}^{n} \frac{(x'_i)^2}{\left(a_i^1\right)^2\left(a_i^j\right)^4}=\left(\rho^1_j\right)^2 \frac{\sum\limits_{i=1}^{n} \frac{(x'_i)^2}{\left(a_i^1\right)^2\left(a_i^j\right)^4}}{\sum\limits_{i=1}^{n} \frac{(x'_i)^2}{\left(a_i^j\right)^4}}=\left(\rho^1_j\right)^2 \frac{\sum\limits_{i=1}^{n} \frac{1}{\left(\left(a_i^1\right)^2-\left(a_i^j\right)^2\right)} \left(\frac{(x'_i)^2}{\left(a_i^j\right)^4}-\frac{(x'_i)^2}{\left(a_i^1\right)^2\left(a_i^j\right)^2}\right)}{\sum\limits_{i=1}^{n} \frac{(x'_i)^2}{\left(a_i^j\right)^4}}.
$$
But the value $\left(\left(a_i^1\right)^2-\left(a_i^j\right)^2\right)$ is independent from the lower index $i$ and we saw in (\ref{eq:onorthog}) that for all $i$ and $j$ holds
$$
0=\sum\limits_{i=1}^{n}\frac{\left(x'_i\right)^2}{\left(a_i^j\right)^2\left(a_i^k\right)^2},
$$
we get that for all $j\geq 2$ the squared length of the $j$-th semi-axis is equal to:
\begin{equation}\label{eq:radius}
\left(\rho^1_j\right)^2=\left(a_1^1\right)^2-\left(a_1^j\right)^2.
\end{equation}

\subsubsection{Duality of confocal systems}\label{sssec:duality}

Denote by $(p^j)^2$ the reciprocal of the norm square of the $j$-th normal vector $n_j(x')$ at $x'$. Geometrically $p^j$ is the length of the foot-point of the perpendicular from the origin to the tangent hyperplane of the $j^{th}$ quadric. In fact,
$$
\left\langle \frac{p^j}{|n_j(x')|}n_j(x')-x',n_j(x')\right\rangle=p^j|n_j(x')|-\langle x',n_j(x')\rangle=1-\sum\limits_{i=1}^n\frac{(x'_i)^2}{\left(a_i^j\right)^2}=0.
$$
Using the observation on the volume of the parallelepiped is spend by conjugate semi-diameters proved in the first section, we have that for all $j$
\begin{equation}
\left(p^j\right)^2\cdot \prod\limits_{k\ne j}\left(\rho^j_k\right)^2=\prod\limits_{i=1}^n\left(a_i^j\right)^2.
\end{equation}
We hold again the opportunity that we involve the sign of the squared value in the notation (as at (\ref{eq:coordwithconfoc})), we can write that for all $j$ we have
\begin{equation}\label{eq:onpj}
\left(p^j\right)^2=\frac{\prod\limits_{i=1}^n\left(a_i^j\right)^2}{\prod\limits_{k\ne j}\left(\left(a_1^j\right)^2-\left(a_1^k\right)^2\right)}
\end{equation}
defining a formula analogous to that of (\ref{eq:coordwithconfoc}). It can be observed the symmetry which exists between these values for $p^i$, and the values already found in (\ref{eq:coordwithconfoc}) for $x'_i$. If the $n$ tangent hyperplanes had been taken as coordinate hyperplanes, $p^i$ would be the coordinates (with suitable sign) of the centre $p$ of the surface. So we have a duality theorem:
\begin{statement}\label{st:duality}
With the point $x'$ as the centre $n$ confocals may be described having the $n$ tangent hyperplanes for principal planes and intersecting in the centre of the original system of surfaces. The axes of the new system of confocals are $a^1_1,a^2_1,\ldots,a^n_1$, $a^1_2,a^2_2,\ldots a^n_2$ $\ldots $ $a^1_n,a^2_n,\ldots a^n_n$. The $n$ tangent hyperplanes of the new (dual) system are the $n$ principal planes of the original system.
\end{statement}

\subsubsection{Polarity with respect to a confocal system}

We introduced the concept of conjugate points earlier. In this paragraph we prove some statements on the impact of the polarity to a confocal system of quadrics.
\begin{statement}\label{st:poleofahypplane}
The locus of the pole of a given hyperplane $\langle H,x\rangle=1$ with regard to a system of confocal surfaces a right line perpendicular to the given plane.
\end{statement}
\begin{proof}
Using (\ref{eq:poleofaplane}) we get the equations $\xi_i=h_i\left(a_i^2-\lambda\right)$ for $i=1,\ldots ,n$. From this we have
$$
-\lambda=\frac{x_1}{h_1}-a_1^2=\cdots =\frac{x_n}{h_n}-a_n^2
$$
which is the system of equations of a line perpendicular to the given hyperplane.
\end{proof}

\begin{remark}
From this we can construct a surface confocal with a given one and touching a given plane. In fact, the pole of the plane is a point of the above line which is perpendicular to the hyperplane. This line intersects the hyperplane in a point which is a point of the searched confocal.
\end{remark}

\begin{remark}
Another consequence of Statement \ref{st:poleofahypplane} that the locus of the pole of a tangent hyperplane of any quadric with respect to any confocal is the normal of the quadric in the considered point of tangency.
\end{remark}

\begin{statement}\label{st:sevpartangent}
If several parallel tangent hyperplanes touch a series of confocal quadrics, the locus of their points of contact is an equilateral hyperbola.
\end{statement}

\begin{proof}
Consider now a system of parallel hyperplanes and determine the locus of points of tangency with respect to a given system of confocal quadrics. By the above remark the searched points are the intersection points of parallel lines through the poles of the hyperplanes by the corresponding perpendicular hyperplanes, respectively. Since parallel hyperplanes have a common $n-2$-dimensional ideal subspace, hence their poles are collinear and lying on the diameter $d$ of the original quadric conjugate to the hyperplane through the origin and belonging to the given pencil of parallel hyperplanes. This means that the points in question are on the plane determined by the diameter $d$ and the direction $n$ of the normals of the given parallel hyperplanes. Let $\Pi$ any hyperplane with equation $\sum_{i}h_ix_i=1$ then by the equation (\ref{eq:poleofaplane}) the coordinates of its pole are $\xi_i=h_ia_i^2$, where the semi-axes of the given quadrics are $a_i$ for $i=1,\ldots, n$. The coordinates of the point $P$ are $p_i=h_ia_i^2+th_i$, where $\sum_ip_ih_i=1$. From this we get that $t=(1-\sum_ih_i^2a_i^2)/\sum_ih_i^2$, and the distance of the point $P$ and the hyperplane through the origin and parallel to $\Pi$ is
$$
d(P,X)=\left|\frac{\langle OP,h\rangle}{|h|^2}h\right|=\frac{\sum_ih_i^2a_i^2+\frac{1-\sum_ih_i^2a_i^2}{\sum_ih_i^2}\sum_ih_i^2}{\sqrt{\sum_ih_i^2}}=\frac{1}{\sqrt{\sum_ih_i^2}}
$$
On the other hand the distance of the point $P$ from the line through the origin with direction vector $h=(h_1,\ldots,h_n)^T$ is
$$
d(P,Y)=\sqrt{|OX|^2-\left(\frac{\left|\langle OX,h\rangle\right|}{|h|^2}|h|\right)^2}=\sqrt{\sum_ih_i^2a_i^4-\frac{\left(\sum_ih_i^2a_i^2\right)^2}{\sum_ih_i^2}}= \frac{\sqrt{\sum_ih_i^2\sum_ih_i^2a_i^4-\left(\sum_ih_i^2a_i^2\right)^2}}{\sqrt{\sum_ih_i^2}}.
$$
The production of the two quantity is
$$
F(h)=\sqrt{\sum_ih_i^2a_i^4-\frac{\left(\sum_ih_i^2a_i^2\right)^2}{\sum_ih_i^2}}= \frac{\sqrt{\sum_ih_i^2\sum_ih_i^2a_i^4-\left(\sum_ih_i^2a_i^2\right)^2}}{\sum_ih_i^2}.
$$
Since for all $\tau\in\mathbb{R}$ $F(h)=F(\tau h)$ thus with respect to the Descartian coordinate system with axes $OX$ and $OY$ the production of the coordinates of $P$ are the same constant, hence the locus of the point $P$ is an equilateral hyperbola with orthogonal asymptotes $OX$ and $OY$.
\end{proof}

\begin{statement}[The Apollonian pedal curve]\label{st:apollonianhyp}
Consider the set of homothetic ellipsoids $E(\lambda)=\{x\in \mathbb{R}^n : \quad \sum_{i=1}^n \left(x_i/\lambda a_i\right)^2=1\}$ $\lambda\in\mathbb{R}^+$ and a point $P=(u_1,\ldots u_n)^T$ of the space. The locus of those points of the ellipsoids which are nearest to $P$ is an equilateral hyperbolic curve of order $n$ which we call \emph{Apollonian curve} of the system of the homothetic system.
\end{statement}

\begin{proof}
For a fixed real $\lambda $ we can determine those points of the ellipsoid $E(\lambda)$ in which the normal lines of the surface go through the point $P$. Hence with a real $t$ has to hold the vector equation $t n(x)=u-x$ (or equivalently $x=u-tn(x)$). Thus we get that for the unknown values $x_i$ have
$$
u_i-x_i=t\frac{x_i}{\left(\lambda a_i\right)^2} \quad \mbox{ for }\quad i=1,\ldots, n.
$$
Rearranging the equations we get that
$$
(t=)\frac{\left(\lambda a_1\right)^2\left(u_1-x_1\right)}{x_1}=\frac{\left(\lambda a_2\right)^2\left(u_2-x_2\right)}{x_2}=\ldots =\frac{\left(\lambda a_n\right)^2\left(u_n-x_n\right)}{x_n}.
$$

This leads to the system of equations
\begin{eqnarray}
\nonumber a_1^2\left(u_1-x_1\right)x_2 &= &a_2^2\left(u_2-x_2\right)x_1 \\
\nonumber a_1^2\left(u_1-x_1\right)x_3 &= &a_3^2\left(u_2-x_2\right)x_1 \\
\nonumber  & \vdots & \\
\nonumber a_1^2\left(u_1-x_1\right)x_n &= &a_n^2\left(u_n-x_n\right)x_1
\end{eqnarray}
which can be simplified to the following one
\begin{eqnarray}\label{eq:apolloniancurve}
\nonumber 0 & = & (a_1^2-a_2^2)x_1x_2 + a_2^2u_2 x_1-a_1^2u_1x_2 \\
& \vdots & \\
\nonumber 0 & = & (a_1^2-a_n^2)x_1x_n + a_n^2u_n x_1-a_1^2u_1x_n.
\end{eqnarray}
We can assume that $a_1^2$ is not equal to $a_i^2$ for $i>1$, because for concentric spheres the searched locus is known it is the line through the common center. Using the substitutions
$$
x_1=y_1, \, x_2=y_2-\frac{a_2^2u_2}{(a_1^2-a_2^2)}, \, \ldots \,,x_n=y_n-\frac{a_n^2u_n}{(a_1^2-a_n^2)},
$$
we get the form
\begin{eqnarray}
\nonumber 0 & = & (a_1^2-a_2^2)y_1y_2 -a_1^2u_1y_2+\frac{a_1^2u_1a_2^2u_2}{(a_1^2-a_2^2)} \\
\nonumber &\vdots & \\
\nonumber 0 & = & (a_1^2-a_2^2)y_1y_n -a_1^2u_1y_n+\frac{a_1^2u_1a_n^2u_n}{(a_1^2-a_n^2)},
\end{eqnarray}
and introducing the parameter $a_1^2u_1\tau:=y_1$, the parametric representation of the curve $A_n(\tau)$ with respect to the coordinate system $\{y_i\}$ is:
\begin{equation}\label{eq:apolloniuscurveinpar}
A_n(\tau):\tau \mapsto \left(a_1^2u_1\tau,\frac{-a_2^2u_2}{(a_1^2-a_2^2)\left((a_1^2-a_2^2)\tau-1\right)},\ldots ,\frac{-a_n^2u_n}{(a_1^2-a_n^2)\left((a_1^2-a_n^2)\tau-1\right)}\right)^T.
\end{equation}
Hence $A_n(\tau)$ is a rational algebraic curve of order $n$ (see \cite{sommerville}), independent from the parameter $\lambda $. It is the intersection of hyperbolic right cylinders based on the equilateral hyperbolas of the $2$-planes, $(y_1,y_2)$, $(y_1,y_3)$, \ldots ,$(y_1,y_n)$, respectively. The poles of the curve are at the parameter values
$$
\tau=\infty, \quad  \tau=\frac{1}{(a_1^2-a_2^2)}, \quad \tau=\frac{1}{(a_1^2-a_3^2)},\quad \ldots \quad \tau=\quad \frac{1}{(a_1^2-a_n^2)}
$$
respectively. Hence the asymptotes of the curve are parallel to the axes $x_1$,\ldots $x_n$ and go through the respective points
$$
p_1=\left(\begin{array}{c}0 \\
0 \\
\vdots \\
0
\end{array}\right),\,
p_2=\left(
\begin{array}{c}
\frac{a_1^2u_1}{(a_1^2-a_2^2)} \\
 0 \\
 \frac{-a_3^2u_3(a_1^2-a_2^2)}{(a_1^2-a_3^2)(a_2^2-a_3^2)}\\
 \vdots \\
 \frac{-a_n^2u_n(a_1^2-a_2^2)}{(a_1^2-a_n^2)(a_2^2-a_n^2)}
\end{array}\right), \,
p_3=\left(\begin{array}{c}
\frac{a_1^2u_1}{(a_1^2-a_3^2)} \\
\frac{-a_2^2u_2(a_1^2-a_3^2)}{(a_1^2-a_2^2)(a_3^2-a_2^2)} \\
0 \\
\vdots \\
\frac{-a_n^2u_n(a_1^2-a_3^2)}{(a_1^2-a_n^2)(a_3^2-a_n^2)}
\end{array}
\right), \,
 \cdots \,
p_n=\left(\begin{array}{c}
\frac{a_1^2u_1}{(a_1^2-a_n^2)} \\
\frac{-a_2^2u_2(a_1^2-a_n^2)}{(a_1^2-a_2^2)(a_n^2-a_2^2)} \\
\vdots \\
\frac{-a_{n-1}^2u_{n-1}(a_1^2-a_{n}^2)}{(a_1^2-a_{n-1}^2)(a_n^2-a_{n-1}^2)} \\
0
\end{array}
\right)
$$
where the coordinates are calculated with respect to the translated basis $\{y_i\}$. The asymptotes are of the form $y=p_i+se_i$ where $e_i$ is the unit direction vector of the axis $y_i$ (or equivalently of the axis $x_i$).
\end{proof}

\begin{remark}\label{rem:apollonianhyp}
If $n=2$ the above Apollonian curve is the so-called Apollonian equilateral hyperbola. As we know from the book \cite{berger} it is also the locus of the centers of the elements of that pencil of conics which determined by the given ellipse ${E}$ and a fixed circle which center is $P$ and intersects ${E}$ in four points. An immediate proof of this fact is the following. If a point $Q$ of the plane is a center of a conic of the pencil, then its polar lines with respect to the given ellipse and circle are parallel to each other. In fact, for any point $Q$ there is a point $Q'$ in the projective plane that the polar of $Q$ with respect to the element of the pencil are go through $Q'$. Since the polar of $Q$ is the ideal line if it is a center of a conic from the pencil, hence the point $Q'$ is also an ideal one. Since the polar of a point with respect to a circle is perpendicular to the segment connecting the point with the center of the circle this common direction of the two polar lines is known. Hence the locus does not depends on the radius of the circle, it only depends on the ellipse and the given point.
Consider now such a center $Q$ which is on the ellipse ${E}$. The polar $q$ of $Q$ with respect to ${E}$ is the tangent of $E$ at $Q$ hence it is perpendicular to $QP$ showing that $Q$ is on the Apollonian hyperbola $\mathcal{A}_2(\tau)$. Clearly, $\mathcal{A}_2(\tau)$ contains also the center of ${E}$ and the point $P$.
We prove that the locus is a conic implying that it is agree the Apollonian hyperbola as we stated. In fact, if $Q'$ and $R'$ are the ideal points corresponding to the above connection to the centers $Q$ and $R$, respectively, the polars of $Q'$ and $R'$ with respect to the conics in the pencil go through $Q$ and $R$, respectively. If $S'$ any further (ideal) point of the line $Q'R'$ then its polar $s'_Q$ with respect to the conic with center $Q$ and its polar $s'_R$ with respect to the conic with center $R$ intersect in a point $S$ which is conjugate to $S'$. Of course $S$ is a center of a conic of the pencil. In this way, we got two pencils of lines with support $Q$ and $R$ are in a projective (but not perspective) connection and the corresponding elements intersect each other in the points $S$ of the searched locus, hence the locus by the Steiner's definition of conics is a conic.
Observe that those ideal points which are giving by the directions of the axes of the ellipse are points of the locus, too since their polar lines with respect to the ellipse and the circle are parallel to each other. Hence the above conic is such a hyperbola which has (at least) five common points with the Apollonian hyperbola implying that the two conics are agree.
\end{remark}

\section{Proper cones of second order}

Another important class of surfaces of second order the class of \emph{proper cones}.  We define by the so-called canonical form the cones which apex at the origin with the equation
\begin{equation}\label{eq:cone}
\sum\limits_{i=1}^n\varepsilon_i\frac{x_i^2}{a_i^2}=0,
\end{equation}
where $a_n\leq a_{n-1}\leq \ldots \leq a_1$ and $\varepsilon_i\in\{\pm 1\}$. The \emph{semi-axes} of this cone are the pairwise orthogonal segments on the corresponding coordinate axes with respective lengths $a_1,\ldots,a_n$. Clearly the origin is the only point of a cone if the signs $\varepsilon_i$ are equal to each other.  In the other cases, the cones contain lines through the origin. The intersection of a proper cone with an affine hyperplane (is not going through the origin) is a quadric of dimension $n-1$. For brevity, use the simple notation $(a_i)^2$ for the signed and squared $i$-th semi-axis, meaning $(a_i)^2=a_i^2$ if $\varepsilon_i=1$ and $(a_i)^2=-a_i^2$ if $\varepsilon=-1$. Then we also write
$$
\sum\limits_{i=1}^n\frac{x_i^2}{(a_i)^2}=0
$$
for the above canonical form.
\subsection{On the intersection of proper confocal cones with common vertex.}

The system of proper confocal cones with vertex at the origin is defined by the equality
\begin{equation}\label{eq:confocalcones}
\sum\limits_{i=1}^n\frac{x_i^2}{a_i^2-\lambda}=0,
\end{equation}
where we have $0<a_n<a_{n-1}<\ldots <a_1$ and $\lambda\in\mathbb{R}$.
Our problem is to determine the common edges of such cones. In the $n$-dimensional space we consider $n-1$ confocal cones whose canonical equations are of the form
$$
\sum\limits_{i=1}^n\frac{x_i^2}{(\alpha^k_i)^2}=0 \quad k=1,\ldots, n-1,
$$
respectively\footnote{One of our purposes to determine the common edges that cones with common vertex $x'$ which go through the respective focal quadrics of an ellipsoid. As we will see in Statement \ref{st:axesoffocalcones} in this case $(\alpha^k_i)^2=(a^{i}_{k+1})^2$, where $(a^{i}_{k+1})^2$ is the signed and squared $k+1$-th axes of the $i$-th confocal through the point $x'$.}.   Then for their intersection we have a system of equations which we can compute.
It can be considered as a linear homogenies system of equations with respect to the variable $y_i=x_i^2$ with the matrix
$$
B=\left(
  \begin{array}{cccc}
    \frac{1}{(\alpha^1_1)^2} & \frac{1}{(\alpha^2_1)^2} & \ldots & \frac{1}{(\alpha^{n-1}_1)^2} \\
    \frac{1}{(\alpha^1_2)^2} & \frac{1}{(\alpha^2_2)^2} & \ldots & \frac{1}{(\alpha^{n-1}_2)^2} \\
    \vdots & \vdots & \vdots & \vdots \\
    \frac{1}{(\alpha^1_{n-1})^2} & \frac{1}{(\alpha^2_{n-1})^2} & \ldots & \frac{1}{(\alpha^{n-1}_{n-1})^2} \\
    \frac{1}{(\alpha^1_n)^2} & \frac{1}{(\alpha^2_{n})^2} & \ldots & \frac{1}{(\alpha^{n-1}_n)^2} \\
  \end{array}
\right).
$$
Every solutions are in the form $t\cdot N$, where $N\in \mathbb{R}^n$ is a vector orthogonal to the columns of $B$. Denote by $B_i$ the $(n-1)\times (n-1)$ matrix can be get from $B$ omitting its $i$-th row. Clearly we can choose for normal vector
$$
N=:\left(\det B_1,\ldots ,(-1)^{i-1}\det B_i,\ldots , (-1)^{n-1}\det B_n\right).
$$
The confocality property implies that for a given $l>1$ we have
$$
\frac{1}{(\alpha^l_i)^2}-\frac{1}{(\alpha^1_i)^2}=\frac{1}{(\alpha^l_i)^2}-\frac{1}{(\alpha^l_i)^2+\lambda(l)}= \frac{-\lambda(l)}{(\alpha^l_i)^2(\alpha^1_i)^2}
$$
holds for all $i$ with a system of $\lambda(l)$ independent from $i$. Hence the determinant of $B_k$ is equal to
$$
\det B_k=\det \left(
  \begin{array}{cccc}
    \frac{1}{(\alpha^1_1)^2} & \frac{-\lambda(2)}{(\alpha^2_1)^2(\alpha^1_1)^2} & \ldots & \frac{-\lambda(n-1)}{(\alpha^{n-1}_1)^2(\alpha^1_1)^2} \\
    \vdots & \vdots & \vdots & \vdots \\
    \frac{1}{(\alpha^1_{k-1})^2} & \frac{-\lambda(2)}{(\alpha^2_{k-1})^2(\alpha^1_{k-1})^2} & \ldots & \frac{-\lambda(n-1)}{(\alpha^{n-1}_{k-1})^2(\alpha^1_{k-1})^2} \\
    \frac{1}{(\alpha^1_{k+1})^2} & \frac{-\lambda(2)}{(\alpha^2_{k+1})^2(\alpha^1_{k+1})^2} & \ldots & \frac{-\lambda(n-1)}{(\alpha^{n-1}_{k+1})^2(\alpha^1_{k+1})^2} \\
    \vdots & \vdots & \vdots & \vdots \\
    \frac{1}{(\alpha^1_n)^2} & \frac{-\lambda(2)}{(\alpha^2_n)^2(\alpha^1_n)^2} & \ldots & \frac{-\lambda(n-1)}{(\alpha^{n-1}_n)^2(\alpha^1_n)^2} \\
  \end{array}
\right)=
$$
$$
=(-1)^{(n-1)(n-2)}\frac{\lambda(2)\cdots\lambda(n-1)}{(\alpha^1_1)^2\cdots (\alpha^1_{k-1})^2(\alpha^1_{k+1})^2\cdots (\alpha^1_n)^2}
\det \left(
  \begin{array}{cccc}
    1 & \frac{1}{(\alpha^2_1)^2} & \ldots & \frac{1}{(\alpha^{n-1}_1)^2} \\
    \vdots & \vdots & \vdots & \vdots \\
    1 & \frac{1}{(\alpha^2_{k-1})^2} & \ldots & \frac{1}{(\alpha^{n-1}_{k-1})^2} \\
    1 & \frac{1}{(\alpha^2_{k+1})^2} & \ldots & \frac{1}{(\alpha^{n-1}_{k+1})^2} \\
    \vdots & \vdots & \vdots & \vdots \\
    1 & \frac{1}{(\alpha^2_{n})^2} & \ldots & \frac{1}{(\alpha^{n-1}_n)^2} \\
  \end{array} \right) .
$$
Denote by $\nu(l)$ ($l\geq 2$) the quantity $\nu(l)=(\alpha^1_{1})^2-(\alpha^1_{l})^2=\ldots =(\alpha^{n-1}_{1})^2-(\alpha^{n-1}_{l})^2$ then we have
$$
\frac{1}{(\alpha^j_{l})^2}-\frac{1}{(\alpha^j_{1})^2}=\frac{\nu(l)}{(\alpha^j_{l})^2(\alpha^j_{1})^2}
$$
for all $j=1,\ldots ,n-1$.
Then we get
$$
\det B_k=\frac{\lambda(2)\cdots\lambda(n-1)}{(\alpha^1_1)^2\cdots (\alpha^1_{k-1})^2(\alpha^1_{k+1})^2\cdots (\alpha^1_n)^2}
\det \left(
  \begin{array}{cccc}
    1 & \frac{1}{(\alpha^2_1)^2} & \ldots & \frac{1}{(\alpha^{n-1}_1)^2} \\
    \vdots & \vdots & \vdots & \vdots \\
    0 & \frac{\nu(k-1)}{(\alpha^2_{k-1})^2(\alpha^2_{1})^2} & \ldots & \frac{\nu(k-1)}{(\alpha^{n-1}_{k-1})^2(\alpha^{n-1}_{1})^2} \\
    0 & \frac{\nu(k+1)}{(\alpha^2_{k+1})^2(\alpha^2_{1})^2} & \ldots & \frac{\nu(k+1)}{(\alpha^{n-1}_{k+1})^2(\alpha^{n-1}_{1})^2} \\
    \vdots & \vdots & \vdots & \vdots \\
    0 & \frac{\nu(n)}{(\alpha^2_{n})^2(\alpha^2_{1})^2} & \ldots & \frac{\nu(n)}{(\alpha^{n-1}_{n})^2(\alpha^{n-1}_{1})^2} \\
  \end{array} \right)=
$$
$$
=\frac{\lambda(2)\cdots\lambda(n-1)\nu(2)\ldots \nu(k-1)\nu(k+1)\ldots \nu(n)}{\left((\alpha^1_1)^2\cdots (\alpha^1_{k-1})^2(\alpha^1_{k+1})^2\cdots (\alpha^1_n)^2\right)\left((\alpha^2_1)^2\cdots (\alpha^{n-1}_1)^2\right)}\det \left(
  \begin{array}{cccc}
    1 & 1 & \ldots & 1 \\
    0 & \frac{1}{(\alpha^2_{2})^2} & \ldots & \frac{1}{(\alpha^{n-1}_{2})^2} \\
    \vdots & \vdots & \vdots & \vdots \\
    0 & \frac{1}{(\alpha^2_{k-1})^2} & \ldots & \frac{1}{(\alpha^{n-1}_{k-1})^2} \\
    0 & \frac{1}{(\alpha^2_{k+1})^2} & \ldots & \frac{1}{(\alpha^{n-1}_{k+1})^2} \\
    \vdots & \vdots & \vdots & \vdots \\
    0 & \frac{1}{(\alpha^2_{n})^2} & \ldots & \frac{1}{(\alpha^{n-1}_{n})^2} \\
  \end{array} \right)=
$$
$$
=\frac{\lambda(2)\cdots\lambda(n-1)\nu(2)\ldots \nu(k-1)\nu(k+1)\ldots \nu(n)}{\left((\alpha^1_1)^2\cdots (\alpha^1_{k-1})^2(\alpha^1_{k+1})^2\cdots (\alpha^1_n)^2\right)\left((\alpha^2_1)^2\cdots (\alpha^{n-1}_1)^2\right)} \det \left(
  \begin{array}{ccc}
     \frac{1}{(\alpha^2_{2})^2} & \ldots & \frac{1}{(\alpha^{n-1}_{2})^2} \\
     \vdots & \vdots & \vdots \\
     \frac{1}{(\alpha^2_{k-1})^2} & \ldots & \frac{1}{(\alpha^{n-1}_{k-1})^2} \\
     \frac{1}{(\alpha^2_{k+1})^2} & \ldots & \frac{1}{(\alpha^{n-1}_{k+1})^2} \\
     \vdots & \vdots & \vdots \\
     \frac{1}{(\alpha^2_{n})^2} & \ldots & \frac{1}{(\alpha^{n-1}_{n})^2} \\
  \end{array} \right).
$$
Since for $l\geq 3$ we have
$$
\frac{1}{(\alpha^l_{i})^2}-\frac{1}{(\alpha^2_{i})^2}= \frac{(\alpha^2_{i})^2-(\alpha^1_{i})^2+(\alpha^1_{i})^2-(\alpha^l_{i})^2}{(\alpha^l_{i})^2(\alpha^2_{i})^2}= \frac{\lambda(l)-\lambda(2)}{(\alpha^2_{i})^2(\alpha^l_{i})^2},
$$
and
$$
\frac{1}{(\alpha^j_{l})^2}-\frac{1}{(\alpha^j_{2})^2}=\frac{\nu(l)-\nu(2)}{(\alpha^j_{l})^2(\alpha^j_{2})^2}
$$
the next two steps lead to the result
$$
\det B_k=\frac{\prod\limits_{i=2}^{n-1}\lambda(i)\prod\limits_{i=3}^{n-1}(\lambda(i)-\lambda(2))\prod\limits_{\substack{i=2 \\ i\ne k}}^n\nu(i)\prod\limits_{\substack{i=3 \\ i\ne k}}^n(\nu(i)-\nu(2))}{\prod\limits_{\substack{ i=1 \\ i\ne k}}^n(\alpha^1_i)^2\prod\limits_{\substack{i=2 \\i\ne k}}^n (\alpha^2_i)^2\prod\limits_{i=2}^{n-1}(\alpha^i_1)^2\prod\limits_{i=3}^{n-1}(\alpha^i_2)^2}
\det \left(
  \begin{array}{ccc}
     \frac{1}{(\alpha^3_{3})^2} & \ldots & \frac{1}{(\alpha^{n-1}_{3})^2} \\
     \vdots & \vdots & \vdots \\
     \frac{1}{(\alpha^3_{k-1})^2} & \ldots & \frac{1}{(\alpha^{n-1}_{k-1})^2} \\
     \frac{1}{(\alpha^3_{k+1})^2} & \ldots & \frac{1}{(\alpha^{n-1}_{k+1})^2} \\
     \vdots & \vdots & \vdots \\
     \frac{1}{(\alpha^3_{n})^2} & \ldots & \frac{1}{(\alpha^{n-1}_{n})^2} \\
  \end{array} \right).
$$
Using the notation $\lambda(1)=\nu(1)=0$ we get
$$
\det B_k=\frac{\prod\limits_{j=1}^{k-2}\prod\limits_{i=j+1}^{n-1}(\lambda(i)-\lambda(j))\prod\limits_{j=1}^{k-2}\prod\limits_{\substack{i=j+1 \\ i\ne k}}^n(\nu(i)-\nu(j))}{\prod\limits_{\substack{ j=1}}^{k-2}\prod\limits_{\substack{ i=j \\ i\ne k}}^n(\alpha^j_i)^2\prod\limits_{j=1}^{k-2}\prod\limits_{i=j+1}^{n-1}(\alpha^i_j)^2}
\det \left(
  \begin{array}{ccc}
     \frac{1}{(\alpha^{k-1}_{k-1})^2} & \ldots & \frac{1}{(\alpha^{n-1}_{k-1})^2} \\
     \frac{1}{(\alpha^{k-1}_{k+1})^2} & \ldots & \frac{1}{(\alpha^{n-1}_{k+1})^2} \\
     \vdots & \vdots & \vdots \\
     \frac{1}{(\alpha^{k-1}_{n})^2} & \ldots & \frac{1}{(\alpha^{n-1}_{n})^2} \\
  \end{array} \right)=
$$
$$
=\prod\limits_{j=1}^{k-2}\frac{\prod\limits_{i=j+1}^{n-1}(\lambda(i)-\lambda(j))\prod\limits_{\substack{i=j+1 \\ i\ne k}}^n(\nu(i)-\nu(j))}{\prod\limits_{\substack{ i=j \\ i\ne k}}^n(\alpha^j_i)^2\prod\limits_{i=j+1}^{n-1}(\alpha^i_j)^2}
\frac{\prod\limits_{i=k}^{n-1}(\lambda(i)-\lambda(k-1))}{\prod\limits_{\substack{ i=k-1 \\ i\ne k}}^n(\alpha^{k-1}_i)^2}\cdot
$$
$$
\cdot \det \left(
  \begin{array}{cccc}
     1 & \frac{1}{(\alpha^{k}_{k-1})^2} & \ldots & \frac{1}{(\alpha^{n-1}_{k-1})^2} \\
     1 & \frac{1}{(\alpha^{k}_{k+1})^2} & \ldots & \frac{1}{(\alpha^{n-1}_{k+1})^2} \\
     \vdots & \vdots & \vdots & \vdots \\
     1 & \frac{1}{(\alpha^{k}_{n})^2} & \ldots & \frac{1}{(\alpha^{n-1}_{n})^2} \\
  \end{array} \right)=
$$
$$
=\prod\limits_{j=1}^{k-2}\frac{\prod\limits_{i=j+1}^{n-1}(\lambda(i)-\lambda(j))\prod\limits_{\substack{i=j+1 \\ i\ne k}}^n(\nu(i)-\nu(j))}{\prod\limits_{\substack{ i=j \\ i\ne k}}^n(\alpha^j_i)^2\prod\limits_{i=j+1}^{n-1}(\alpha^i_j)^2}
\frac{\prod\limits_{i=k}^{n-1}(\lambda(i)-\lambda(k-1))\prod\limits_{i=k+1}^n(\nu(i)-\nu(k-1))}{\prod\limits_{\substack{ i=k-1 \\ i\ne k}}^n(\alpha^{k-1}_i)^2\prod\limits_{i=k}^{n-1}(\alpha^i_j)^2}\cdot
$$
$$
\cdot \det \left(
  \begin{array}{ccc}
      \frac{1}{(\alpha^{k}_{k+1})^2} & \ldots & \frac{1}{(\alpha^{n-1}_{k+1})^2} \\
      \vdots & \vdots & \vdots \\
      \frac{1}{(\alpha^{k}_{n})^2} & \ldots & \frac{1}{(\alpha^{n-1}_{n})^2} \\
  \end{array} \right)=
$$
$$
=\prod\limits_{j=1}^{k-1}\frac{\prod\limits_{i=j+1}^{n-1}(\lambda(i)-\lambda(j))\prod\limits_{\substack{i=j+1 \\ i\ne k}}^n(\nu(i)-\nu(j))}{\prod\limits_{\substack{ i=j \\ i\ne k}}^n(\alpha^j_i)^2\prod\limits_{i=j+1}^{n-1}(\alpha^i_j)^2}
\det \left(
  \begin{array}{ccc}
      \frac{1}{(\alpha^{k}_{k+1})^2} & \ldots & \frac{1}{(\alpha^{n-1}_{k+1})^2} \\
      \vdots & \vdots & \vdots \\
      \frac{1}{(\alpha^{k}_{n})^2} & \ldots & \frac{1}{(\alpha^{n-1}_{n})^2} \\
  \end{array} \right)=
$$
$$
=\prod\limits_{j=1}^{k-1}\frac{\prod\limits_{i=j+1}^{n-1}(\lambda(i)-\lambda(j))\prod\limits_{\substack{i=j+1 \\ i\ne k}}^n(\nu(i)-\nu(j))}{\prod\limits_{\substack{ i=j \\ i\ne k}}^n(\alpha^j_i)^2\prod\limits_{i=j+1}^{n-1}(\alpha^i_j)^2}\prod\limits_{j=k}^{n-2}\frac{\prod\limits_{i=j+1}^{n-1}(\lambda(i)-\lambda(j)) \prod\limits_{i=j+2}^n(\nu(i)-\nu(j+1))}{\prod\limits_{i=j+1}^n(\alpha^j_i)^2\prod\limits_{i=j+1}^{n-1}(\alpha^i_{j+1})^2}\frac{1}{(\alpha^{n-1}_{n})^2}=
$$
$$
=\frac{\prod\limits_{j=1}^{n-2}\left(\prod\limits_{i=j+1}^{n-1}(\lambda(i)-\lambda(j))\prod\limits_{i=j+1}^n(\nu(i)-\nu(j))\right)} {\prod\limits_{i=1}^{k-1}(\nu(k)-\nu(i))\prod\limits_{i=k+1}^{n}(\nu(i)-\nu(k))}
\frac{\prod\limits_{j=1}^{k-1}(\alpha^j_k)^2\prod\limits_{j=k}^{n-2}(\alpha^j_j)^2\prod\limits_{i=k+1}^{n-1}(\alpha^i_k)^2}
{\prod\limits_{j=1}^{n-2}\left(\prod\limits_{i=j}^n(\alpha^j_i)^2 \prod\limits_{i=j+1}^{n-1}(\alpha^i_j)^2\right)\prod\limits_{j=k}^{n-2}(\alpha^{j+1}_{j+1})^2}\frac{\nu(n)-\nu(n-1)}{(\alpha^{n-1}_{n})^2}=
$$
$$
=\prod\limits_{j=1}^{n-2}\left(\frac{\prod\limits_{i=j+1}^{n-1}(\lambda(i)-\lambda(j))\prod\limits_{i=j+1}^n(\nu(i)-\nu(j))} {\prod\limits_{i=j}^n(\alpha^j_i)^2 \prod\limits_{i=j+1}^{n-1}(\alpha^i_j)^2}\right)\frac{\nu(n)-\nu(n-1)}{(\alpha^{n-1}_{n-1})^2(\alpha^{n-1}_{n})^2} \frac{\prod\limits_{j=1}^{n-1}(\alpha^j_k)^2} {\prod\limits_{i=1}^{k-1}(\nu(k)-\nu(i))\prod\limits_{i=k+1}^{n}(\nu(i)-\nu(k))}.
$$
Denote by $P(n)$ that part of this quantity which is independent from $k$, then
\begin{equation}\label{eq:detBk}
\det B_k=P(n)\frac{\prod\limits_{j=1}^{n-1}(\alpha^j_k)^2} {\prod\limits_{i=1}^{k-1}(\nu(k)-\nu(i))\prod\limits_{i=k+1}^{n}(\nu(i)-\nu(k))}=(-1)^{k-1}P(n)\frac{\prod\limits_{j=1}^{n-1}(\alpha^j_k)^2} {\prod\limits_{\substack{i=1 \\ i\ne k}}^{n}(\nu(i)-\nu(k))}
\end{equation}
with the value:
$$
P(n)=\prod\limits_{j=1}^{n-2}\left(\frac{\prod\limits_{i=j+1}^{n-1}(\lambda(i)-\lambda(j))\prod\limits_{i=j+1}^n(\nu(i)-\nu(j))} {\prod\limits_{i=j}^n(\alpha^j_i)^2 \prod\limits_{i=j+1}^{n-1}(\alpha^i_j)^2}\right)\frac{\nu(n)-\nu(n-1)}{(\alpha^{n-1}_{n-1})^2(\alpha^{n-1}_{n})^2}.
$$
Since $\nu(i)-\nu(k)=(\alpha^1_{1})^2-(\alpha^1_{i})^2-(\alpha^1_{1})^2+(\alpha^1_{k})^2=(\alpha^1_{k})^2-(\alpha^1_{i})^2$,
$N$ is parallel to the vector
$$
\left(\frac{\prod\limits_{j=1}^{n-1}(\alpha^j_1)^2} {\prod\limits_{i=2}^{n}\left((\alpha^1_{1})^2-(\alpha^1_{i})^2\right)}, \ldots, \frac{\prod\limits_{j=1}^{n-1}(\alpha^j_k)^2} {\prod\limits_{\substack{i=1 \\ i\ne k}}^{n}((\alpha^1_{k})^2-(\alpha^1_{i})^2)}, \ldots , \frac{\prod\limits_{j=1}^{n-1}(\alpha^j_n)^2} {\prod\limits_{i=1}^{n-1}((\alpha^1_{n})^2-(\alpha^1_{i})^2)}\right)^T.
$$

This gives the following parametric representation of a common edge: 
\begin{equation}\label{eq:squaredcoord}
x_1=\pm t\sqrt{\frac{\prod\limits_{j=1}^{n-1}(\alpha^j_1)^2}{\prod\limits_{i=2}^{n}\left((\alpha^1_{1})^2-(\alpha^1_{i})^2\right)}}, \ldots , x_n=\pm t\sqrt{\frac{\prod\limits_{j=1}^{n-1}(\alpha^j_n)^2}{\prod\limits_{i=1}^{n-1}((\alpha^1_{n})^2-(\alpha^1_{i})^2)}}.
\end{equation}
The squared distance of a common point from the origin is:
$$
x_1^2+\ldots + x_n^2=t^2\left(\frac{\prod\limits_{j=1}^{n-1}(\alpha^j_1)^2}{\prod\limits_{i=2}^{n}\left((\alpha^1_{1})^2-(\alpha^1_{i})^2\right)}+ \ldots +\frac{\prod\limits_{j=1}^{n-1}(\alpha^j_n)^2}{\prod\limits_{i=1}^{n-1}((\alpha^1_{n})^2-(\alpha^1_{i})^2)}\right).
$$

We prove that the value $t^2$ is itself the squared distance, so we have:
\begin{lemma}\label{lem:identity}
Let $n\geq 2$ and assume that $(\alpha^k_i)^2=(\alpha^1_i)^2+\lambda(k)$ holds for all $i$. Then we have the identity:
\begin{equation}\label{eq:identity}
\left(\frac{\prod\limits_{j=1}^{n-1}(\alpha^j_1)^2}{\prod\limits_{i=2}^{n}\left((\alpha^1_{1})^2-(\alpha^1_{i})^2\right)}+ \ldots +\frac{\prod\limits_{j=1}^{n-1}(\alpha^j_n)^2}{\prod\limits_{i=1}^{n-1}((\alpha^1_{n})^2-(\alpha^1_{i})^2)}\right)=1
\end{equation}
\end{lemma}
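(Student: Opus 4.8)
The plan is to recognize the left-hand side of (\ref{eq:identity}) as the leading coefficient of a Lagrange interpolating polynomial, so that the entire lemma collapses to one standard fact. First I would abbreviate $u_k := (\alpha^1_k)^2$ for the $n$ quantities entering the denominators, and invoke the confocality hypothesis in the form $(\alpha^j_k)^2 = u_k + \lambda(j)$ with $\lambda(1)=0$. Each numerator then becomes $\prod_{j=1}^{n-1}(\alpha^j_k)^2 = \prod_{j=1}^{n-1}(u_k+\lambda(j)) = Q(u_k)$, where $Q(x):=\prod_{j=1}^{n-1}(x+\lambda(j))$ is a \emph{monic} polynomial of degree exactly $n-1$, while each denominator becomes $\prod_{i\neq k}(u_k-u_i)$. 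Thus the identity to be proved reads exactly $\sum_{k=1}^n Q(u_k)\big/\prod_{i\neq k}(u_k-u_i)=1$.

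Next I would interpolate $Q$ at the $n$ distinct nodes $u_1,\ldots,u_n$. Since $\deg Q = n-1$, the Lagrange interpolant through the points $(u_k, Q(u_k))$ is $Q$ itself, i.e. $Q(x)=\sum_{k=1}^n Q(u_k)\prod_{i\neq k}(x-u_i)/(u_k-u_i)$. Comparing coefficients of $x^{n-1}$ on both sides, the right-hand side contributes $\sum_k Q(u_k)/\prod_{i\neq k}(u_k-u_i)$ (each Lagrange basis factor being monic of degree $n-1$ with leading coefficient $1/\prod_{i\neq k}(u_k-u_i)$), whereas the left-hand side contributes the leading coefficient of $Q$, which is $1$ because $Q$ is monic. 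This delivers (\ref{eq:identity}) at once.

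A purely algebraic phrasing, which I would fall back on to avoid naming interpolation, expands $Q(u_k)=\sum_{m=0}^{n-1} q_m u_k^m$ and uses the classical symmetric-function identity $\sum_{k=1}^n u_k^m\big/\prod_{i\neq k}(u_k-u_i)=h_{m-(n-1)}(u_1,\ldots,u_n)$, with $h_r$ the complete homogeneous symmetric polynomial, $h_0=1$ and $h_r=0$ for $r<0$. Only the top term $m=n-1$, carrying the coefficient $q_{n-1}=1$, survives, again giving the value $1$; all lower terms vanish.

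I expect no serious obstacle: the entire content is the observation that the numerators assemble into a single monic polynomial of degree $n-1$ evaluated at the nodes $u_k$, after which the result is the standard fact that the top divided difference of such a polynomial over $n$ nodes equals its leading coefficient. The one point meriting a word of care is the tacit distinctness of the $u_k=(\alpha^1_k)^2$ (without it the denominators vanish and the expression is undefined); this reflects the genuine distinctness of the confocal cones and I would state it explicitly. As a self-contained alternative, a short induction on $n$ is available, telescoping the $n$-node sum against the $(n-1)$-node one and starting from the transparent base case $u_1/(u_1-u_2)+u_2/(u_2-u_1)=1$ for $n=2$.
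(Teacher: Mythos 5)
Your proof is correct, but it takes a genuinely different route from the paper. You observe that with $u_k:=(\alpha^1_k)^2$ the confocality hypothesis turns every numerator into $Q(u_k)$ for the single monic polynomial $Q(x)=\prod_{j=1}^{n-1}(x+\lambda(j))$ of degree $n-1$, so the left-hand side is the top divided difference $\sum_k Q(u_k)/\prod_{i\ne k}(u_k-u_i)$, which equals the leading coefficient of $Q$, namely $1$; this is a one-shot conceptual argument with no induction, and it in fact proves the stronger statement that the sum equals $1$ for \emph{any} monic polynomial of degree $n-1$ in place of $Q$ (and $0$ for lower degrees). The paper instead argues by induction on $n$: after checking $n=2$ and $n=3$ by hand, it views the left-hand side as a function of $\lambda(n-1)$, notes that it is affine in that variable, and evaluates it at the two special values $\lambda(n-1)=-(\alpha^1_n)^2$ and $\lambda(n-1)=-(\alpha^1_{n-1})^2$, at each of which a term drops out or two terms merge so that the $(n-1)$-node inductive hypothesis applies; an affine function equal to $1$ at two distinct points is identically $1$. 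Your closing remark about a telescoping induction is essentially the paper's argument, so you have both routes in hand; your primary argument is shorter and isolates the real mechanism (the numerators assemble into one polynomial of degree below the number of nodes), while the paper's is more elementary and self-contained. Your explicit flagging of the needed distinctness of the $u_k$ is a point the paper leaves tacit, and is worth keeping.
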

\begin{proof}
For $n=2$ we have
$$
\frac{(\alpha^1_1)^2}{\left((\alpha^1_{1})^2-(\alpha^1_{2})^2\right)}+\frac{(\alpha^1_2)^2}{\left((\alpha^1_{2})^2-(\alpha^1_{1})^2\right)}= \frac{(\alpha^1_1)^2-(\alpha^1_2)^2}{(\alpha^1_{1})^2-(\alpha^1_{2})^2}=1.
$$
Similarly, in the three-dimensional case we have
$$
\left(\frac{(\alpha^1_1)^2(\alpha^2_1)^2}{\left((\alpha^1_{1})^2-(\alpha^1_{2})^2\right)\left((\alpha^1_{1})^2- (\alpha^1_{3})^2\right)}+\frac{(\alpha^1_2)^2(\alpha^2_2)^2}{\left((\alpha^1_{2})^2-(\alpha^1_{1})^2\right)\left((\alpha^1_{2})^2- (\alpha^1_{3})^2\right)}+\right.
$$
$$
\left.+\frac{(\alpha^1_3)^2(\alpha^2_3)^2}{\left((\alpha^1_{3})^2-(\alpha^1_{1})^2\right)\left((\alpha^1_{3})^2- (\alpha^1_{2})^2\right)}\right)=
$$
$$
=\left(\frac{(\alpha^1_1)^2(\alpha^2_1)^2\left((\alpha^1_{2})^2- (\alpha^1_{3})^2\right)-(\alpha^1_2)^2(\alpha^2_2)^2\left((\alpha^1_{1})^2- (\alpha^1_{3})^2\right)+(\alpha^1_3)^2(\alpha^2_3)^2\left((\alpha^1_{1})^2-(\alpha^1_{2})^2\right)}{\left((\alpha^1_{1})^2-(\alpha^1_{2})^2\right)\left((\alpha^1_{1})^2- (\alpha^1_{3})^2\right)\left((\alpha^1_{2})^2- (\alpha^1_{3})^2\right)}\right),
$$
and using the connection $(\alpha^2_i)^2=(\alpha^1_i)^2+\lambda(2)$ we get the required equality,
$$
\left(\frac{(\alpha^1_1)^4\left((\alpha^1_{2})^2- (\alpha^1_{3})^2\right)-(\alpha^1_2)^4\left((\alpha^1_{1})^2- (\alpha^1_{3})^2\right)+(\alpha^1_3)^4\left((\alpha^1_{1})^2-(\alpha^1_{2})^2\right)}{\left((\alpha^1_{1})^2- (\alpha^1_{2})^2\right)\left((\alpha^1_{1})^2- (\alpha^1_{3})^2\right)\left((\alpha^1_{2})^2- (\alpha^1_{3})^2\right)}\right)=1.
$$
Hence we can prove by induction.
Let us assume that the identity holds for all $k\leq n-1$ and consider the left hand as a function of $\lambda(n-1)$.
$$
f(\lambda(n-1)):=\left(\frac{\prod\limits_{j=1}^{n-1}(\alpha^j_1)^2}{\prod\limits_{i=2}^{n}\left((\alpha^1_{1})^2-(\alpha^1_{i})^2\right)}+ \ldots +\frac{\prod\limits_{j=1}^{n-1}(\alpha^j_n)^2}{\prod\limits_{i=1}^{n-1}((\alpha^1_{n})^2-(\alpha^1_{i})^2)}\right)=
$$
$$
 =\frac{\prod\limits_{j=1}^{n-2}(\alpha^j_1)^2}{\prod\limits_{i=2}^{n-1}\left((\alpha^1_{1})^2-(\alpha^1_{i})^2\right)} \frac{(\alpha^{1}_1)^2+\lambda(n-1)}{(\alpha^1_{1})^2-(\alpha^1_{n})^2}+ \ldots +\frac{\prod\limits_{j=1}^{n-2}(\alpha^j_{n-1})^2}{\prod\limits_{i=1}^{n-2}((\alpha^1_{n-1})^2-(\alpha^1_{i})^2)} \frac{(\alpha^{1}_{n-1})^2+\lambda(n-1)}{(\alpha^1_{n-1})^2-(\alpha^1_{n})^2}+
$$
$$
 +\frac{\prod\limits_{j=1}^{n-2}(\alpha^j_n)^2}{\prod\limits_{i=1}^{n-2}((\alpha^1_{n})^2-(\alpha^1_{i})^2)} \frac{(\alpha^1_n)^2+\lambda(n-1)}{(\alpha^1_{n})^2-(\alpha^1_{n-1})^2}.
$$
Clearly it is linear in its variable and $f(-(\alpha^1_{n})^2)=1$ by the assumption of the induction. On the other hand if we substitute $\lambda(n-1):=- (\alpha^1_{n-1})^2$ we get a sum (with $(n-1)$ terms) for which the inductive assumption can be applied again, implying that
$$
f(-(\alpha^1_{n-1})^2)=\frac{\prod\limits_{j=1}^{n-2}(\alpha^j_1)^2}{\prod\limits_{\substack{i=2 \\ i\ne n-1}}^{n}\left((\alpha^1_{1})^2-(\alpha^1_{i})^2\right)} + \ldots + \frac{\prod\limits_{j=1}^{n-2}(\alpha^j_n)^2}{\prod\limits_{i=1}^{n-2}((\alpha^1_{n})^2-(\alpha^1_{i})^2)}=1.
$$
Hence $f(\lambda(n-1))=1$ for all values of $\lambda(n-1)$ as we stated.
\end{proof}
From Lemma \ref{lem:identity} we get that the squared length of the segment of the common edges with endpoint $x$ is
\begin{equation}\label{eq:squareddistance}
x_1^2+\ldots + x_n^2=t^2,
\end{equation}
where $t$ is the common parameter in the system of equations in (\ref{eq:squaredcoord}).

\subsection{The tangent cones}

Given a quadric and a point $P$ in its exterior. The union of the tangent lines of the quadric through $P$ is the \emph{tangent cone} to the quadric with \emph{vertex (apex)} $P$. The tangent cone intersects the quadric in the point of the polar hyperplane of $P$ with respect to the quadric, hence this intersection is a quadric of dimension $n-1$. Thus the tangent cone is a surface of second order with centre $P$ and it has $n$ pairwise perpendicular axes.

\begin{statement}\label{st:onthetangentcone}
The axes of the tangent cone are the normals of the confocals of the quadric through the vertex of the cone.
\end{statement}

\begin{proof}
Consider the tangent hyperplane one of these $n$ hypersurfaces which pass through the point $P$. The pole of that plane with regard to the original quadric lies on the polar hyperplane of $P$. We also know that it lies also on the normal
to the examined confocal. It is therefore the point where the normal meets the hyperplane of contact of the cone. The $n$ normals meet the hyperplane of contact in $n$ points, such that each is the pole of the $n-2$-dimensional flat joining the other $n-1$ points with respect to the section quadric in the polar hyperplane of $P$. Hence the $n$ normals form a complete system of conjugate diameters of the cone, and since they are mutually at right angle they are the axes of the cone.
\end{proof}

\subsection{Some properties of the focal quadrics}

The focal quadric $C_k$ (of dimension $n-1$) are the common limits of the distinct types of the confocals. They are lies on the respective principal hyperplanes of the given ellipsoid. Hence $C_k\cap C_l$ for $k\ne l$ is in the $n-2$-dimensional subspace $x_k=x_l=0$. For the points $x$ of this intersection hold the equalities
$$
1=\sum\limits_{\substack{i=1 \\ i\ne k,l}}^n\frac{x_i^2}{a_i^2-a_k^2}\, \mbox{ and } \, 1=\sum\limits_{\substack{i=1 \\ i\ne k,l}}^n\frac{x_i^2}{a_i^2-a_l^2}
$$
showing that they are the common points of two $(n-2)$-dimensional quadrics which are confocals to the ellipsoid $\sum_{i\ne k,l}{x_i^2}/{a_i^2}=1$. Since the points of the intersection hold the equality
$$
\sum\limits_{\substack{i=1 \\ i\ne k,l}}^n\frac{x_i^2}{a_i^2-a_k^2}=\sum\limits_{\substack{i=1 \\ i\ne k,l}}^n\frac{x_i^2}{a_i^2-a_l^2}
$$
we have that
$$
0=\sum\limits_{\substack{i=1 \\ i\ne k,l}}^n\frac{x_i^2(a_k^2-a_l^2)}{(a_i^2-a_k^2)(a_i^2-a_l^2)}=(a_k^2-a_l^2)\sum\limits_{\substack{i=1 \\ i\ne k,l}}^n\frac{x_i^2}{(a_i^2-a_k^2)(a_i^2-a_l^2)}
$$
showing that they are also points of that $(n-2)$-dimensional cone which apex is at the origin and has signed and squared semi-axes $(a_i^2-a_k^2)(a_i^2-a_l^2)$.

\begin{statement}\label{st:axesoffocalcones}
Let $x'$ be any point of the space and denote by $C_k(x')$ the cones with apex $x'$ and generators through the focal quadric $C_k$. Then there are an orthogonal system of lines $l_1,\ldots, l_n$ through $x'$ which elements are the common axes of the focal cones. The magnitudes of the axes corresponding to the same line are dependent from $k$, more precisely the signed and squared lengths of the semi-axes of $C_k(x')$ are $\{\left(a^i_k\right)^2 \,:\, i=1,\ldots,n\}$.
\end{statement}

\begin{proof}
As we saw the focal quadric $C_n$ is the limit of the confocal quadrics if $\lambda$ tends to $a_n$. If $x'$ is any point of the space the sequence of the tangent cones with apex $x'$ of those confocals for which $\lambda $ tends to $a_n$ is tends to the cone $C_n(x')$ with apex $x'$ and through $C_n$. Since in this sequence all cones have the same axes in direction and also in length, this system is the axes of the cone envelops the focal quadric $C_n$. Observe that this is true also any other cone $C_k(x')$ with apex $x'$ and base quadric $C_k$. The directions of the systems of axes are the same for all $k$ but the lengths of the corresponding axes are distinct. Observe that if the $n$ normals be made the axes of coordinates, the equation of the cone must take the form $\sum_{i=1}^n A_ix_i^2=0$. To verify it and determine the concrete coefficients we consider the equation of the tangent cone with apex $x'$ to the original ellipsoid with respect to the original orthogonal basis. It is on the form\footnote{This formula can be calculated from the intersection of the ellipsoid with a line determined by two its points. Substitute the point $\lambda x'+\nu x''$ to the equality of the ellipsoid, then we have the quadratic equation $\lambda^2 U'+2\lambda\nu P +\nu^2 U''=0$, where $U',U'',P$ are the corresponding substitutional values of the quadratic forms in a zero reduced form. If the point $x'$ is the given one and the other $x''$ is on the ellipsoid then one of the solutions corresponds to $\lambda=0$. In order that the other root also corresponds to $\lambda=0$ (U''=0) we must have $P=0$ which is the equation of the polar hyperplane of $x'$ with respect to the ellipsoid. If $x'$ is not on the ellipsoid and the line touch the ellipsoid the quadratic must have equal roots and the coordinates of the two points must be connected with the equality $U'U''=P^2$. Hence the points $x$ of the touching cone satisfies the equation (\ref{eq:tangentcone}).}
\begin{equation}\label{eq:tangentcone}
\left({x'}^TA^{-2}x'-1\right)\left(x^TA^{-2}x-1\right)=\left({x'}^TA^{-2}x-1\right)^2
\end{equation}
Translate it to the new origin $x'$ arises the equation
$$
\left({x'}^TA^{-2}x'-1\right)\left(x^TA^{-2}x\right)=\left({x'}^TA^{-2}x\right)^2
$$
and the orthogonal transformation at this point $x'$ (after a long calculation\footnote{In dimension three this calculation can be found in the book \cite{salmon} in paragraphs 171, 172, 173.}) leads to the final form:
\begin{equation}\label{eq:canformofthetangentcone}
\sum\limits_{i=1}^n\frac{x_i^2}{\left(a^i_1\right)^2-(a_1)^2}=0,
\end{equation}
where the primary axis in the determination of the confocals through $x'$ is taken to the $k$-th axis of the original ellipsoid.
As a particular case may be found the equation (with respect to this new system) of the focal cone $C_k(x')$. In fact, for the square of the primary axis we have to substitute $(a_1)^2-a_k^2=(a^1_1)^2-(a^1_k)^2=\ldots =(a^n_1)^2-(a^n_k)^2$ giving the canonical form
\begin{equation}\label{eq:focalconic}
C_k(x'): \quad \sum\limits_{i=1}^n\frac{x_i^2}{\left(a^i_k\right)^2}=0 \, \mbox{ for } \, k=2,\ldots ,n.
\end{equation}
as we stated.
\end{proof}

Summarizing the calculations (\ref{eq:squaredcoord}), (\ref{eq:squareddistance}) and the result of Statement \ref{st:axesoffocalcones} we have the theorem:
\begin{theorem}\label{thm:commonedges}
The system of focal cones of the ellipsoid
$$
\sum\limits_{i=1}^n\frac{x_i^2}{(a_{i})^2}=1
$$
with common vertex $x'$ with respect to an appropriate coordinate system with origin $x'$ can be given by the equalities
$$
\sum\limits_{i=1}^n\frac{x_i^2}{(a^i_{k+1})^2}=0 \quad k=1,\ldots, n-1,
$$
where $(a^i_{k+1})^2$ is the signed and squared $k$-th semi axis of the $i$-th confocals through the point $x'$.  The points of the common edges hold the parametric system of equation (\ref{eq:squaredcoord}) with coefficients
$$
\frac{\prod\limits_{j=1}^{n-1}(a^1_{j+1})^2}{\prod\limits_{i=2}^{n}(a^1_{2})^2-(a^i_{2})^2},\quad  \ldots \quad , \frac{\prod\limits_{j=1}^{n-1}(a^n_{j+1})^2}{\prod\limits_{i=1}^{n-1}(a^n_{2})^2-(a^i_{2})^2}.
$$
These are the squares of the direction cosines of the corresponding edges, respectively.
\end{theorem}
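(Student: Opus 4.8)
The plan is to assemble this theorem from three ingredients that are already in place: the canonical form of the focal cones furnished by Statement \ref{st:axesoffocalcones}, the common-edge computation culminating in the direction vector $N$ and the parametric system (\ref{eq:squaredcoord}), and the normalization identity of Lemma \ref{lem:identity} together with its consequence (\ref{eq:squareddistance}). Almost nothing new has to be proved; the work lies in matching the two notational systems correctly.

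First I would invoke Statement \ref{st:axesoffocalcones}: the focal cones $C_k(x')$, $k=2,\ldots,n$, all share one orthogonal frame $l_1,\ldots,l_n$ through $x'$, and in the coordinates attached to this frame each of them has the canonical form (\ref{eq:focalconic}), that is $\sum_i x_i^2/(a^i_k)^2=0$. Shifting the index $k\mapsto k+1$ produces exactly the $n-1$ cones $\sum_i x_i^2/(a^i_{k+1})^2=0$, $k=1,\ldots,n-1$, displayed in the statement. This exhibits the system of focal cones as an instance of the system studied at the start of the confocal-cone subsection, under the identification $(\alpha^k_i)^2=(a^i_{k+1})^2$ recorded there in the footnote.

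Next I would check that this system satisfies the confocality hypothesis $(\alpha^k_i)^2=(\alpha^1_i)^2+\lambda(k)$ with $\lambda(k)$ independent of $i$, which is precisely what the determinant computation for $\det B_k$ and Lemma \ref{lem:identity} require. Writing $\lambda_i$ for the parameter of the $i$-th confocal through $x'$, its $(k+1)$-th signed squared semi-axis is $(a^i_{k+1})^2=a_{k+1}^2-\lambda_i$, so that $(a^i_{k+1})^2-(a^i_2)^2=a_{k+1}^2-a_2^2$ is independent of $i$; this is just the constant-difference-of-squared-semi-axes property of a confocal family, already used before (\ref{eq:radius}). Hence $\lambda(k)=a_{k+1}^2-a_2^2$ and the hypothesis holds. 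I expect this index bookkeeping — keeping straight that the superscript of $(a^i_{k+1})^2$ is a confocal index while its subscript is an axis index, and confirming the additive structure of the cone family — to be the only genuinely delicate point; everything else is quotation.

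With the hypothesis verified, the direction $N$ of a common edge is the vector computed just before (\ref{eq:squaredcoord}). Substituting $(\alpha^j_1)^2=(a^1_{j+1})^2$ in the numerators and, via $\nu(i)-\nu(k)=(\alpha^1_k)^2-(\alpha^1_i)^2=(a^k_2)^2-(a^i_2)^2$, rewriting the denominators, turns the components of $N$ into the coefficients displayed in the theorem, yielding the parametric system (\ref{eq:squaredcoord}) for the edges. Finally, Lemma \ref{lem:identity} shows that these coefficients sum to $1$, whence (\ref{eq:squareddistance}) gives $x_1^2+\cdots+x_n^2=t^2$ for a point $x$ of an edge; therefore each coefficient equals $x_i^2/|x|^2=(x_i/|x|)^2$, i.e.\ the square of the corresponding direction cosine, which is the last assertion of the statement.
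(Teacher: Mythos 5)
Your proposal is correct and follows exactly the route the paper takes: the theorem is presented there as a summary of the common-edge computation (\ref{eq:squaredcoord}), the length identity (\ref{eq:squareddistance}) via Lemma \ref{lem:identity}, and the canonical forms from Statement \ref{st:axesoffocalcones}, glued together by the identification $(\alpha^k_i)^2=(a^i_{k+1})^2$ noted in the paper's footnote. Your explicit check that this identification satisfies the confocality hypothesis $(a^i_{k+1})^2-(a^i_2)^2=a_{k+1}^2-a_2^2$ independent of $i$ is a detail the paper leaves implicit, but it is the same argument.
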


\begin{example}\label{ex:commonedge}
In the three dimensional case let $a_1=a$, $a_2=b$ and $a_3=c$ with $0<c<b<a$. Clearly, the equation of the focal conics are
$$
\frac{x^2}{a^2-c^2}+\frac{y^2}{b^2-c^2}=1 \quad \mbox{and} \quad z=0,
$$
and
$$
\frac{x^2}{a^2-b^2}+\frac{z^2}{c^2-b^2}=1 \quad \mbox{and} \quad y=0.
$$
If the respective signed and squared axes of the confocals through $x'$ are $(a^i_j)^2$, for $i=1,2,3$ and $j=1,2,3$, then
$$
\mathrm{sign}(a^i_j)^2=\left(\begin{array}{ccc}
+ & + & + \\
+ & + & - \\
+ & - & -
\end{array}\right)
$$
The equation of the focal conics are
$$
\frac{x^2}{(a^1_{2})^2}+\frac{y^2}{(a^2_{2})^2}+\frac{z^2}{(a^3_{2})^2}=0,
$$
and
$$
\frac{x^2}{(a^1_{3})^2}+\frac{y^2}{(a^2_{3})^2}+\frac{z^2}{(a^3_{3})^2}=0.
$$
The square of the searched coefficients are
$$
\frac{(a^1_{2})^2(a^1_{3})^2}{((a^1_{2})^2-(a^2_{2})^2)((a^1_{2})^2-(a^3_{2})^2))}, \quad \frac{(a^2_{2})^2(a^2_{3})^2}{((a^2_{2})^2-(a^1_{2})^2)((a^2_{2})^2-(a^3_{2})^2))}, \quad
\frac{(a^3_{2})^2(a^3_{3})^2}{((a^3_{2})^2-(a^1_{2})^2)((a^3_{2})^2-(a^2_{2})^2))}.
$$
By confocality these three values are positive and determines the direction vector of four lines through the point $x'$. By Lemma \ref{lem:identity} if
\begin{eqnarray}
\nonumber x &= & t\sqrt{\frac{(a^1_{2})^2(a^1_{3})^2}{((a^1_{2})^2-(a^2_{2})^2)((a^1_{2})^2-(a^3_{2})^2)}} \\
\nonumber y &= & t\sqrt{\frac{(a^2_{2})^2(a^2_{3})^2}{((a^2_{2})^2-(a^1_{2})^2)((a^2_{2})^2-(a^3_{2})^2)}} \\
\nonumber z &= & t\sqrt{\frac{(a^3_{2})^2(a^3_{3})^2}{((a^3_{2})^2-(a^1_{2})^2)((a^3_{2})^2-(a^2_{2})^2)}}
\end{eqnarray}
are the three coordinates of a point of the first common edge then its distance from the point $x'$ is equal to $t^2$. By Statement \ref{st:duality} and equality (\ref{eq:onpj}) we get that the square of the first coordinate of the origin is
$$
(p_1)^2=\frac{\left(a_1^1\right)^2\left(a_2^1\right)^2\left(a_3^1\right)^2} {\left(\left(a_1^1\right)^2-\left(a_1^2\right)^2\right)\left(\left(a_1^1\right)^2-\left(a_1^3\right)^2\right)}=\frac{\left(a_1^1\right)^2\left(a_2^1\right)^2\left(a_3^1\right)^2} {\left(\left(a_2^1\right)^2-\left(a_2^2\right)^2\right)\left(\left(a_2^1\right)^2-\left(a_2^3\right)^2\right)}
$$
implying that this is also the first coordinate of that point $X$ on the common edge which is lying on the plane parallel to the tangent hyperplane of the first confocal at $x'$. Hence the distance of $x'$ and $X$ is equal to $t=a_1^1$.
\end{example}

The calculation of the above example is valid in the $n$-dimensional space, too.

\begin{statement}\label{st:lengthoftheintercepts}
$(n-1)$ cones having a common vertex $x$ envelope the $(n-1)$ focal quadrics (of distinct types). The length of the intercept made on one of their common edges by a hyperplane through the origin parallel to the tangent hyperplane to a confocal through $x$ is equal to the major semi-axis of the given confocal.
\end{statement}

\begin{figure}[ht]
  \centering
  \includegraphics[scale=1]{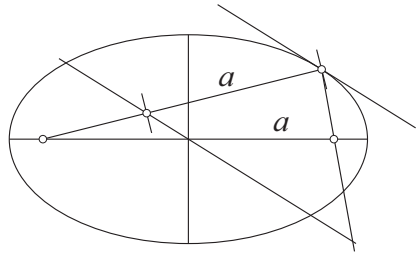}\\
  \caption{Semi-axis-major as an intercept on the focal radii.}\label{fig:intercept}
\end{figure}

\begin{proof}
We use the equations (\ref{eq:squaredcoord}) and (\ref{eq:squareddistance}). Substitute $\alpha_i^{j}:=a^i_{j+1}$ for all $k=1,\ldots ,n-1$ and $i=1,\ldots , n$. Then we get from the duality property (\ref{eq:onpj}) that the first coordinate is equal to
$$
(p^1)^2=\frac{\prod\limits_{i=1}^n\left(a_i^1\right)^2}{\prod\limits_{k=2}^n\left(\left(a_1^1\right)^2-\left(a_1^k\right)^2\right)}=x_1^2,
$$
hence we get from (\ref{eq:squaredcoord}) the equality
$$
\frac{\prod\limits_{i=1}^n\left(a_i^1\right)^2}{\prod\limits_{k=2}^n\left(\left(a_1^1\right)^2-\left(a_1^k\right)^2\right)}=t^2\frac{\prod\limits_{j=1}^{n-1}(a^1_{j+1})^2}{\prod\limits_{i=2}^{n}\left((a^1_{2})^2-(a^i_{2})^2\right)}
$$
implying $t^2=\left(a_1^1\right)^2$ which is by (\ref{eq:squareddistance}) the square of the length of the intercept and also the square of the major semi-axis of the investigated first confocal.
This proves the statement.
\end{proof}
In the plane the above result is well-known. We constructed it in Fig.(\ref{fig:intercept}): the line through the centre of the ellipse and parallel to a tangent to an ellipse cuts off the focal radii portions equal to the semi-axis-major. In the three dimensional case the similar result proved by first Prof. MacCullagh \cite{maccullagh}.

\section{Famous applications in dimension three}

The formulas of the previous sections gives a good frame to prove some known results in the three-space. In this section we examine three such problems.

\subsection{Right cones and focal conics}

The following theorem can be used in mechanics, descriptive geometry and pure geometry of the three-dimensional space, respectively.

\begin{theorem}[See e.g. \cite{stachelbook}]
Let $n=3$. The locus of the apices of right cones through a real focal quadric is the other real focal quadric.
\end{theorem}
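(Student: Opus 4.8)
The plan is to read off the shape of the enveloping cone from Statement~\ref{st:axesoffocalcones} and reduce the whole problem to a coincidence of confocal parameters. For $n=3$ the two real focal quadrics are the focal ellipse $C_3$ (in the plane $z=0$) and the focal hyperbola $C_2$ (in the plane $y=0$), with equations as in Example~\ref{ex:commonedge}. By \eqref{eq:focalconic} the cone $C_3(x')$ with apex $x'$ whose generators run through the focal ellipse has, in the orthogonal frame centred at $x'$ furnished by the normals of the confocals through $x'$, the canonical form $\sum_{i=1}^3 x_i^2/(a^i_3)^2=0$, where $(a^i_3)^2=a_3^2-\lambda^i=c^2-\lambda^i$ and $\lambda^1,\lambda^2,\lambda^3$ are the three parameters of the confocals through $x'$. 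A quadratic cone in canonical form is a cone of revolution precisely when two of its three signed squared semi-axes coincide and the remaining one has the opposite sign (otherwise the form is sign-definite and the cone collapses to its apex). Since $(a^i_3)^2-(a^j_3)^2=\lambda^j-\lambda^i$, two semi-axes of $C_3(x')$ agree if and only if two of the parameters $\lambda^i$ agree; so the task becomes: locate the points through which fewer than three distinct confocals pass.

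By \eqref{eq:flambdafunc} the $\lambda^i$ are the roots of $f(\lambda)$, so a coincidence is a double root of $f$, i.e. $x'$ lies on the discriminant locus of $f$. To single out the relevant branch I would impose $x_2'=0$ (the plane of $C_2$): then \eqref{eq:flambdafunc} factors as $f(\lambda)=(a_2^2-\lambda)\,g(\lambda)$ with $g(\lambda)=(x_1')^2(a_3^2-\lambda)+(x_3')^2(a_1^2-\lambda)-(a_1^2-\lambda)(a_3^2-\lambda)$, so $\lambda=a_2^2=b^2$ is automatically a root and becomes a double root exactly when $g(b^2)=0$. A one-line computation rewrites $g(b^2)=0$, after dividing by $(a^2-b^2)(c^2-b^2)$, as
\[
\frac{(x_1')^2}{a^2-b^2}+\frac{(x_3')^2}{c^2-b^2}=1,\qquad x_2'=0,
\]
which is precisely the equation of the focal hyperbola $C_2$. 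Conversely, each point of $C_2$ makes $\lambda=b^2$ a double root, so two of the $(a^i_3)^2=c^2-\lambda^i$ coincide; the sign analysis following \eqref{eq:flambdafunc} (the third root $\lambda_0$ falls in the ellipsoid range $\lambda_0<c^2$) shows the remaining semi-axis $c^2-\lambda_0>0$ carries the opposite sign, so $C_3(x')$ is a genuine real cone of revolution. Hence the locus of apices of right cones through $C_3$ is exactly $C_2$.

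The reciprocal statement -- that the apices of right cones through the focal hyperbola $C_2$ fill out the focal ellipse $C_3$ -- follows from the identical argument with the indices $2$ and $3$ interchanged: one puts $x_3'=0$, factors $f(\lambda)=(a_3^2-\lambda)\,h(\lambda)$, and the condition $h(c^2)=0$ reads off as $\frac{(x_1')^2}{a^2-c^2}+\frac{(x_2')^2}{b^2-c^2}=1$ with $x_3'=0$, the equation of $C_3$.

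The step I expect to be the main obstacle is making the chain ``revolution cone $\iff$ two confocal parameters coincide $\iff$ $x'$ on a focal conic'' rigorous and \emph{exhaustive}, in particular excluding spurious components: one must discard the branch $z_0=0$ (apex in the base plane of $C_3$, where the cone through $C_3$ degenerates into a pair of planes) and verify that off the plane $x_2'=0$ no point produces a double root. The cleanest safeguard -- which I would run in parallel to close these gaps -- is the direct computation. Writing the cone through $C_3$ with apex $(x_0,y_0,z_0)$ in coordinates centred at the apex gives the homogeneous quadric
\[
\frac{(z_0x-x_0z)^2}{a^2-c^2}+\frac{(z_0y-y_0z)^2}{b^2-c^2}=z^2,
\]
whose $3\times3$ symmetric matrix $M$ I would force to satisfy ``$M-\mu I$ has rank one'' for some $\mu$. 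Vanishing of the upper-left $2\times2$ minor forces $\mu$ to equal one of the two diagonal entries $z_0^2/(a^2-c^2)$ or $z_0^2/(b^2-c^2)$; the choice $\mu=z_0^2/(b^2-c^2)$ makes the next minor force $y_0=0$ and reproduces the focal-hyperbola equation above, while $\mu=z_0^2/(a^2-c^2)$ forces $x_0=0$ and leads to an equation whose left side is a sum of manifestly positive terms, hence with no real solution. This confirms that the locus is exactly $C_2$, with $z_0\neq0$ automatic and no extraneous components.
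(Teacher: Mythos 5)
Your proposal is correct, and its first half is essentially the paper's own argument: both write the cone over a focal conic with apex $x'$ in the canonical form \eqref{eq:focalconic} furnished by Statement~\ref{st:axesoffocalcones}, observe that it is a cone of revolution exactly when two of the signed squared semi-axes $(a^i_k)^2$ coincide, translate this into a double root of the cubic $f$ of \eqref{eq:flambdafunc}, and identify the double-root locus with the other real focal conic, discarding the branch belonging to $C_1$ because that conic has no real points. Where you genuinely diverge is in how you certify this chain: the paper disposes of exhaustiveness (a double root of $f$ forces $x'$ onto a focal conic) and of the reality of the cone in one rather compressed sentence about the interlacing of the roots with the $a_j^2$, whereas you (i) explicitly check the sign condition making the repeated-eigenvalue quadric a real cone rather than a single point, and (ii) back the whole equivalence with an independent direct computation, writing the cone over $C_3$ as $\frac{(z_0x-x_0z)^2}{a^2-c^2}+\frac{(z_0y-y_0z)^2}{b^2-c^2}=z^2$ and demanding that $M-\mu I$ have rank one. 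I verified that computation: the upper-left minor forces $\mu$ to be a diagonal entry, the choice $\mu=z_0^2/(b^2-c^2)$ forces $y_0=0$ and reduces to $\frac{x_0^2}{a^2-b^2}+\frac{z_0^2}{c^2-b^2}=1$, i.e.\ the focal hyperbola, while $\mu=z_0^2/(a^2-c^2)$ reproduces the imaginary focal conic $C_1$. This second route is self-contained, handles the degenerate plane $z_0=0$ cleanly, and actually closes gaps that the paper's proof leaves implicit; the price is that it is a coordinate computation specific to $n=3$, whereas the confocal-parameter argument is the one that generalizes.
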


\begin{proof}
To find the locus of the vertices of right cones which can envelope a focal quadric we consider the canonical equation (\ref{eq:tangentcone}) of the cone through to $C_k$ with apex $x'$
$$
\sum\limits_{i=1}^n\frac{x_i^2}{\left(a^i_k\right)^2}=0.
$$
It may represent a right cone if $n-1$ from the above coefficients are equals. Since for a fixed $k$ the numbers $\left(a^i_k\right)^2$ are the distinct roots of the polynomial equation (\ref{eq:flambdafunc}),
two of them could be equal to each other if and only if $f((a^j_k)^2)=0$ for an index $j=2, \ldots ,n$ implying that it is the square of two consecutive semi-axes from $a_2, \ldots ,a_n$ which are equal to each other; hence either $(a^j_k)^2=(a_j)^2=(a_{j+1})^2=(a^{j+1}_k)^2$ or $(a^{j-1}_k)^2=(a^j_k)^2=(a_{j-1})^2=(a_{j})^2$ hold. (In both of these cases the index $k$ distinct from the other two ones.) This means that the vertex $x'$ of the cone by definition is a real point of a focal quadric. In the first case, if $x'_j=0$ then it is on $C_j$ and if $x'_{j+1}=0$ then it is on $C_{j+1}$. On the other hand if e.g. we assume that $\left(a^j_k\right)^2=\left(a^{j+1}_k\right)^2$ ($j, j+1\ne k$) then the examined cone with vertex $x'$ and through $C_k$ has canonical form
$$
C_k(x'): \quad \sum\limits_{\substack{i=1 \\ i\ne j,j+1}}^{j-1}\frac{x_i^2}{\left(a^i_k\right)^2}+\frac{x_j^2}{\left(a^j_k\right)^2}+\frac{x_{j+1}^2}{\left(a^{j}_k\right)^2}=0
$$
showing that it has two semi-axes which are equal to each other.

In the three-dimensional case for $k=3$ the index $j$ may be $1$ or $2$. Since $C_1$ has no real point we know that $C_2$ contains the vertex of $C_3(x')$ and the latter is a right cone and vice versa. The equations of the two focal conics are
$$
C_3:\, \left\{\frac{x_1^2}{a_1^2-a_3^2}+\frac{x_2^2}{a_2^2-a_3^2}=1,\, x_3=0\right\} \, \mbox{ and } \, C_2:\, \left\{\frac{x_1^2}{a_1^2-a_2^2}+\frac{x_3^2}{a_3^2-a_2^2}=1,\, x_2=0\right\}
$$
and the canonical equation of the right cone through $C_3$ and with the vertex $x'\in C_2$ is
$$
\frac{x_1^2}{a_1^2}+\frac{x_2^2}{a_1^2}+\frac{x_3^2}{a_3^2}=0.
$$
\end{proof}

\subsection{Chasles's construction on conjugate diameters}

The standard proof of Rytz construction based on the so-called "two-circle figure" (see in Fig. \ref{fig:rytz}) in which we draw the incircle and the circumcircle of the ellipse. The same figure with a little modification enable to get another construction to solve this problem.

\begin{figure}[ht]
  \centering
  \includegraphics[width=15cm]{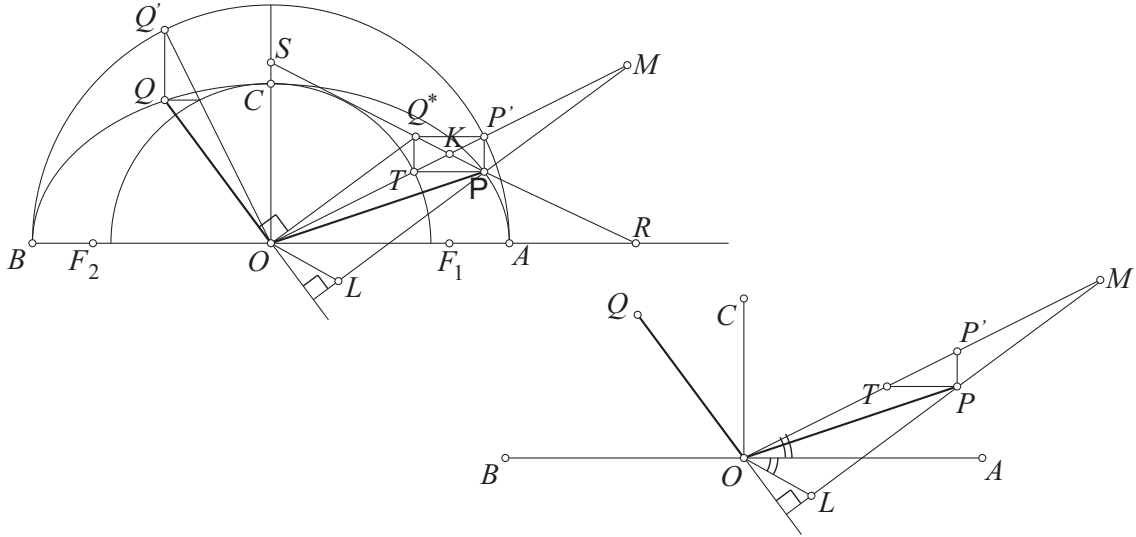}\\
  \caption{Conjugate diameters and axes }\label{fig:rytz}
\end{figure}

In fact, let the line $PM$ be the normal of the ellipse at $P$. This is perpendicular to $OQ$ hence it is parallel to $OQ^\star$. If $M$ is the intersection of this normal with the line $OK$ (as in the standard proof $Q^\star$ is the rotated copy of $OQ$ by $\pi/2$ and $K$ is the middle point of the segment $PQ^\star $), then the triangles $OTQ^\star$ and $MP'P$ are congruent to each other. From this we get that $PM$ is congruent to $OQ^\star$. Let $L$ be the reflected image of $M$ at $P$. Since $PL$ is parallel and equal to $OQ^\star$, $OL$ is also parallel and congruent to $Q^\star P$. Hence the axis $AB$ of the ellipse is the bisector of the angle $LOM\angle$. The other axis of the ellipse is perpendicular to $AB$ at $O$, and if we draw parallels from $P$ to these axes we get the points $T$ and $P'$ as the intersections of the line $OM$ with these parallels, respectively. Clearly, the lengths of $OT$ and $OP'$ are equals to the lengths of the semi-axes, respectively. The construction has the following steps:

\begin{itemize}
\item Draw perpendicular to line $OQ$ from $P$ and determine the points $M$, $L$ on this line by the property $|PM|=|PL|=|OQ|$.
\item Draw the bisectors of the angle $LOM\angle$, these are the axes of the ellipse.
\item Draw parallels to these bisectors from the point $P$ and determine the intersections of these lines with the line $OM$ ($T$, $P'$). The lengths of the semi-axes are the lengths of the segments $OT$, $OP'$, respectively.
\end{itemize}

Observe that $L$ and $M$ determine a tangential pencil of confocal conics in the plane. From these pencils there are two conics an ellipse and a hyperbole through the point $O$. By the bisector property, the tangent lines of these conics at $O$ are the axes of the searching ellipse. If we visualize the union of lines $OM$ and $OL$ as a cone with apex $O$ and through the focal figure of the confocal system of conics determined by the points $M$ and $L$ then the axes of the ellipse are equals the axes of this cone. Hence this construction contains only such concepts whose analogous exist in higher dimensions, too.

Let us follow our investigations in the three-space. Our purpose to give a construction, which determine the axes of an ellipsoid from a complete system of conjugate diameters.
To this consider the three pairwise conjugate semi-diameters $OP$,$OQ$ and $OR$, respectively. The construction of Chasles (see in \cite{chasles} or \cite{salmon}) contains the following steps:
\begin{itemize}
\item Determine the three mutually perpendicular line through $P$, one of its the normal of the ellipsoid (perpendicular to the plane $(OQR)$ at $P$) and the other two are parallel to the axes of the ellipse $\mathcal{E}$ which is the intersection of the plane $(OQR)$ by the given ellipsoid. (By the above construction we can get these lines.)

\item They contains the axes of a confocal system of quadrics dual to the confocal system of the given ellipsoid. By Statement \ref{st:duality} the axes of this systems can be given by the system $\{a_1^1,a_1^2,a_1^3\}$ as respective lengths of the semi-axes. The square of the semi-axes of the focal conics of this system are $\{(a_1^1)^2-(a_1^3)^2,(a_1^2)^2-(a_1^3)^2\}$; $\{(a_1^1)^2-(a_1^2)^2,(a_1^3)^2-(a_1^2)^2\}$ giving an ellipse and a hyperbole, respectively. The square of the axes of $\mathcal{E}$ are $(\rho_2^1)^2=(a_1^1)^2-(a_1^2)^2$ and $(\rho_3^1)^2=(a_1^1)^2-(a_1^3)^2$. These values are known from the construction, hence we can give also the squares of the lengths of the semi-axes of the focal conics.

\item The intersection of the two focal cones with apex $O$ is the union of four common edges, the six planes determining by these edges intersect to each other in three mutual perpendicular lines which are the three axes of these two confocal cones. To prove this statement observe that the two focal cones are confocal quadrics and so they principal axes are common in their directions. The three principal axis intersects a plane of intersection in three points which are form an autopolar triangle of this plane with respect to the two intersection conics by the two focal cones. The four common points of the two conic of intersections determines that quadrangle which diagonal points form the only autopolar triangle with respect to both of the conics. By Statement \ref{st:axesoffocalcones} these are the searched axes of the given ellipsoid, while the planes through $P$ and parallel to the principal planes of the ellipsoid cut off on these four lines parts equal in length to the semi-axes by Statement \ref{st:lengthoftheintercepts}.
\end{itemize}

Observe that this theoretical construction cannot be constructed with ruler and compass in general, because the construction of the common lines of two cones of second order implies the construction of the common points of two conics, which is some cases unconstructible.  More details on this problem can be found in \cite{gho-prok}.

\subsection{Staude's wire model in $3$-space}

At the end of the eighteenth century O. Staude printed the book \cite{staude} with a lot of interesting results. H. D. Thompson write his recension on the book the following: \emph{Possibly the most interesting, although not the most general of the focal properties which Professor Staude deduces, are the following. (The "focal distance" from any point to the focus of the one of a pair of focal conics is defined as the shortest distance on the broken line to a point on that conic, and thence to the adjacent focus). In an ellipsoid, for any point on the surface, the sum of the focal distances to a focus of the focal ellipse and to the opposite focus of the focal hyperbola is a constant. And the normal to the surface at the point bisects the angle of the two lines. This theorem is the basis of the well-known wire model, No. 110 of the Brill collection, which represents the two focal curves of the system by wires rigidly connected, and which has a string fastened so as to represent the sum of the focal distances of the theorem.} This result can be cited without proof in the book of Hilbert and Cohn-Vossen \cite{hilbert-cohnvossen}, too.

\begin{lemma}\label{lem:fastenedstring}
Let $r(u_1,\ldots,u_{n-1})=(x_1(u_1,\ldots,u_{n-1}),\ldots ,x_n(u_1,\ldots,u_{n-1}))$ be a hypersurface of the $n$-dimensional Euclidean space. $F$ and $P$ are two points of the space and $Q$ is a point of the surface for which the length of the broken line $FQP$ is minimal. Then the angles between the normal vector of the surface at its point $Q$ and the respective segments $FQ$ and $QP$ are equal to each other.
\end{lemma}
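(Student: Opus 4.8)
The plan is to exploit the first-order stationarity condition satisfied by the minimizing point $Q$. Parametrize the surface point as $Q=r(u)$ with $u=(u_1,\dots,u_{n-1})$, and consider the broken-line length as a function of the parameters,
\[
L(u)=|F-r(u)|+|r(u)-P|.
\]
Since $Q$ realizes the minimum of $L$ over the surface and $Q$ is (generically) distinct from both $F$ and $P$, both summands are differentiable at the parameter value corresponding to $Q$, and each partial derivative $\partial L/\partial u_j$ must vanish there. This replaces the geometric hypothesis by an analytic equation.

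Next I would differentiate the two summands explicitly. Writing $\phi(u)=|F-r(u)|$ and $\psi(u)=|r(u)-P|$, differentiation of $\phi^2=\langle F-r,\,F-r\rangle$ and $\psi^2=\langle r-P,\,r-P\rangle$ gives
\[
\partial_{u_j}\phi=\frac{\langle Q-F,\ \partial_{u_j}r\rangle}{|Q-F|},\qquad
\partial_{u_j}\psi=\frac{\langle Q-P,\ \partial_{u_j}r\rangle}{|Q-P|}.
\]
Introducing the unit vectors $e_1=(Q-F)/|Q-F|$ and $e_2=(Q-P)/|Q-P|$, the stationarity condition $\partial_{u_j}L=0$ for every $j$ becomes
\[
\langle e_1+e_2,\ \partial_{u_j}r\rangle=0\qquad (j=1,\dots,n-1).
\]
Thus $e_1+e_2$ is orthogonal to every tangent vector $\partial_{u_j}r$ of the hypersurface at $Q$, which means precisely that $e_1+e_2$ is parallel to the surface normal $N$ at $Q$.

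Finally I would convert this collinearity into the equal-angle statement. Taking $N$ proportional to $e_1+e_2$ and using the elementary symmetry
\[
\langle e_1+e_2,\ e_1\rangle=1+\langle e_1,e_2\rangle=\langle e_1+e_2,\ e_2\rangle,
\]
together with $|e_1|=|e_2|=1$, we conclude that the cosine of the angle between $N$ and $e_1$ equals the cosine of the angle between $N$ and $e_2$. Since $e_1$ and $e_2$ are the unit directions of the two segments $FQ$ and $QP$ (consistently oriented toward $Q$), the normal makes equal angles with the two segments, i.e. the normal line bisects the angle at $Q$; this is exactly the asserted reflection property.

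I do not expect a substantive obstacle: this is the classical Fermat/reflection argument, and the only points requiring care are the smoothness hypotheses (the differentiability of $r$ and the existence of a well-defined normal at $Q$, together with $Q\ne F$ and $Q\ne P$ so that both norms are smooth) and the sign bookkeeping when differentiating the norms. Once the relation $e_1+e_2\parallel N$ is established, the equal-angle conclusion follows immediately from the symmetry $\langle e_1+e_2,e_1\rangle=\langle e_1+e_2,e_2\rangle$.
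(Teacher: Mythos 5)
Your proof is correct and is essentially identical to the paper's: both set the partial derivatives of $|F-r(u)|+|r(u)-P|$ to zero, deduce that the sum of the two unit vectors along the segments is orthogonal to all tangent vectors $\partial_{u_j}r$ and hence parallel to the normal, and conclude the equal-angle property from the fact that the two vectors are unit. You are slightly more explicit at the end (the symmetry $\langle e_1+e_2,e_1\rangle=1+\langle e_1,e_2\rangle=\langle e_1+e_2,e_2\rangle$, which the paper merely asserts), and your sign bookkeeping is in fact cleaner than the paper's, which has a harmless sign slip in the displayed derivative.
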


\begin{proof}
Denote the coordinates of $F$ and $P$ by $(f_1,\ldots f_n)^T$ and $(p_1,\ldots, p_n)^T$, respectively. We also denote by $f=\sum_{i=1}^n(x_i-f_i)^2$ and $g=\sum_{i=1}^n(-x_i+p_i)^2$ the functions $|FP|^2$ and $|PQ|^2$, respectively. Then the partial derivatives of the length function $\sqrt{f}+\sqrt{g}$ of variable $u_1,\ldots, u_n$ have to vanish implying the equalities:
$$
0=\frac{\frac{\partial f}{\partial u_i}}{2|FQ|}+\frac{\frac{\partial g}{\partial u_i}}{2|QP|}=\frac{\sum\limits_{j=1}^{n}(x_j-f_j)\frac{\partial x_j}{\partial u_i}}{|FQ|}-\frac{\sum\limits_{j=1}^{n}(x_j-p_j)\frac{\partial x_j}{\partial u_i}}{|PQ|}=\sum\limits_{j=1}^{n}\left(\frac{(x_j-f_j)}{|FQ|}-\frac{(x_j-p_j)}{|PQ|}\right)\frac{\partial x_j}{\partial u_i}
$$
for all $i=1,\ldots (n-1)$. Hence $FQ/|FQ|-PQ/|PQ|$ orthogonal to $n-1$ independent vectors of the tangent hyperplane at the searched point $Q$ meaning that it is parallel to the normal of the hypersurface at this point $Q$. Since the vectors $FQ/|FQ|$ and $PQ/|PQ|$ are unit vectors this proves the lemma.
\end{proof}
The following result can be found in \cite{stachelbook}, too.
\begin{lemma}\label{lem:hhstring}
Assume that $H_1$ and $H_2$ are two points on the same branch (on the different branches) of the focal hyperbola and $E$ is any point of the focal ellipse. Then the difference $|H_1E|-|EH_2|$ (the sum $|H_1E|+|EH_2|$) of the lengths of the edges of the broken line $H_1EH_2$ is independent from the position of $E$. Similarly if $E_1$ and $E_2$ are two points on the focal ellipse then the difference $|E_1H|-|HE_2|$ of the  broken line $E_1HE_2$ is independent from the position of $H$ in the focal hyperbola.
\end{lemma}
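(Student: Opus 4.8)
The plan is to reduce all three assertions to one explicit formula for the distance between an arbitrary point of the focal ellipse and an arbitrary point of the focal hyperbola. Writing, as in the statements above, the focal ellipse $C_3$ in the plane $x_3=0$ and the focal hyperbola $C_2$ in the plane $x_2=0$, I would introduce the abbreviations $A:=a_1^2-a_3^2$, $B:=a_2^2-a_3^2$, $C:=a_1^2-a_2^2$, so that $A=B+C$ with $A>B>0$ and $C>0$. Then I parametrize $E=(\sqrt{A}\cos\varphi,\ \sqrt{B}\sin\varphi,\ 0)\in C_3$ and $H=(\varepsilon\sqrt{C}\cosh\psi,\ 0,\ \sqrt{B}\sinh\psi)\in C_2$, where $\varepsilon\in\{+1,-1\}$ records which branch of the hyperbola carries $H$.

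The heart of the proof is to compute $|EH|^2$ and recognize it as a perfect square. Using $A\cos^2\varphi+B\sin^2\varphi=B+C\cos^2\varphi$ and $C\cosh^2\psi+B\sinh^2\psi=C+A\sinh^2\psi$ together with the confocality relation $A=B+C$, the constant and squared terms recombine (only a single cross term survives) and one obtains
\[
|EH|^2=\left(\sqrt{A}\,\cosh\psi-\varepsilon\sqrt{C}\,\cos\varphi\right)^2 .
\]
Because $\sqrt{A}\cosh\psi\ge\sqrt{A}\ge\sqrt{C}\ge\sqrt{C}|\cos\varphi|$, the bracket is nonnegative, so $|EH|=\sqrt{A}\cosh\psi-\varepsilon\sqrt{C}\cos\varphi$. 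The decisive structural feature is that this distance splits as a sum of a term depending only on $H$ (through $\psi$ and the branch $\varepsilon$) and a term depending only on $E$ (through $\varphi$).

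Granting this formula, all three claims become pure cancellation. For $H_1,H_2$ with data $(\psi_1,\varepsilon_1)$, $(\psi_2,\varepsilon_2)$ and $E$ with parameter $\varphi$, one has $|H_1E|-|H_2E|=\sqrt{A}(\cosh\psi_1-\cosh\psi_2)-\sqrt{C}(\varepsilon_1-\varepsilon_2)\cos\varphi$ and $|H_1E|+|H_2E|=\sqrt{A}(\cosh\psi_1+\cosh\psi_2)-\sqrt{C}(\varepsilon_1+\varepsilon_2)\cos\varphi$. If $H_1,H_2$ lie on the same branch then $\varepsilon_1=\varepsilon_2$ and the difference loses its $\cos\varphi$-term, while if they lie on opposite branches then $\varepsilon_1+\varepsilon_2=0$ and the sum loses its $\cos\varphi$-term; in both cases the result is independent of the position of $E$. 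Symmetrically, for two ellipse points $E_1,E_2$ and a single $H=(\psi,\varepsilon)$ the equal terms $\sqrt{A}\cosh\psi$ cancel in $|E_1H|-|E_2H|=-\varepsilon\sqrt{C}(\cos\varphi_1-\cos\varphi_2)$, which no longer depends on $\psi$, i.e. on where $H$ sits on its branch.

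I expect the only genuine obstacle to be the algebraic verification of the perfect-square identity and the careful bookkeeping of the branch sign $\varepsilon$; everything after that is immediate. As a sanity check and a source of geometric meaning, one may note that the foci of the focal ellipse, at $(\pm\sqrt{C},0,0)$, are exactly the vertices of the focal hyperbola, while the foci of the focal hyperbola, at $(\pm\sqrt{A},0,0)$, are exactly the vertices of the focal ellipse; the additive decomposition above is precisely the refinement of the classical two-foci string property that this self-dual configuration produces.
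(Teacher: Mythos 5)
Your proposal is correct and follows essentially the same route as the paper: parametrize the focal ellipse and focal hyperbola trigonometrically/hyperbolically, use confocality to collapse $|EH|^2$ into the perfect square $(\sqrt{a^2-c^2}\cosh\psi-\varepsilon\sqrt{a^2-b^2}\cos\varphi)^2$, and let the resulting additive split of $|EH|$ into an $E$-part and an $H$-part do the cancellation. Your explicit branch bookkeeping via $\varepsilon$ is in fact tidier than the paper's $\pm/\mp$ treatment, which only works out the first case in detail.
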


\begin{proof}
Using the notation of Example \ref{ex:commonedge} the focal ellipse contains that points of the space which are of the form
$(\sqrt{a^2-c^2}\cos u, \sqrt{b^2-c^2}\sin u, 0)^T$ for $0\leq u< 2\pi$ and the points of the two branches of the focal hyperbola have the coordinates
$(\pm\sqrt{a^2-b^2}\cosh v, 0, -\sqrt{b^2-c^2}\sinh v)^T$. If $H_i$ are the same branch of the hyperbola then we have
$$
|H_1E|-|EH_2|=\sqrt{(\sqrt{a^2-c^2}\cos u \mp\sqrt{a^2-b^2}\cosh v_1)^2+(b^2-c^2)^2\sin ^2 u+(b^2-c^2)^2\sinh ^2 v_1}-$$
$$
-\sqrt{(\sqrt{a^2-c^2}\cos u \pm\sqrt{a^2-b^2}\cosh v_2)^2+(b^2-c^2)^2\sin ^2 u+(b^2-c^2)^2\sinh ^2 v_2}=
$$
$$
=\sqrt{(a^2-b^2)\cos^2 u\mp 2\sqrt{a^2-c^2}\cos u\sqrt{a^2-b^2}\cosh v_1+(a^2-c^2)\cosh^2 v_1}-
$$
$$
-\sqrt{(a^2-b^2)\cos^2 u\mp 2\sqrt{a^2-c^2}\cos u\sqrt{a^2-b^2}\cosh v_2+(a^2-c^2)\cosh^2 v_2}=
$$
$$
=|\sqrt{a^2-b^2}\cos u\mp \sqrt{a^2-c^2}\cosh v_1|-|\sqrt{a^2-b^2}\cos u\mp \sqrt{a^2-c^2}\cosh v_2|.
$$
Since $\sqrt{a^2-c^2}\cosh v>\sqrt{a^2-b^2}\cos u$ for all values $u$ and $v$ the terms containing the parameter $u$ are vanishing showing the truth of the statement in this case. The other two statements of the lemma can be proved similarly.
\end{proof}

\begin{lemma}\label{lem:roleofcommonedges}
If $P$ is any point of the space and $l$ is a common transversal of the focal conics through $P$ with points of intersection $E\in C_3\cap l$ and $H\in C_2\cap l$ for which $E$ separates $P$ and $H$ (for which $H$ separates $P$ and $E$). Then for any point $H_1$ ($E_1$) on the same branch of the focal hyperbola (of the focal ellipse) as $H$ is, the minimal length of the broken line $PFH_1$ ($PFE_1$) from $P$ to $H_1$ ($E_1$) through a point $F$ of the focal ellipse (of the same branch of the focal hyperbola as $H$) attend at the point $E$ ($H$).

If $P$ separates the two points of intersection, then we have the inequality $|PE|-|EH_1|\leq |PF|-|FH_1|$.
\end{lemma}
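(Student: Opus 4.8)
The plan is to reduce each of the three assertions to the triangle inequality on the line $l$, after using Lemma \ref{lem:hhstring} to absorb the dependence on the free endpoint into an additive constant. The crucial structural fact is that $P$, $E$ and $H$ are collinear, since all three lie on $l$; thus once the variable endpoint $H_1$ (respectively $E_1$) is eliminated, the optimization of a broken line degenerates into a comparison with a straight segment. Notice that no information about the ellipsoid itself enters: only the positions of $E$ and $H$ on $l$ and the constancy statements of Lemma \ref{lem:hhstring} are used.

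First I would treat the case where $E$ separates $P$ and $H$. Since $H_1$ and $H$ lie on the same branch of the focal hyperbola, Lemma \ref{lem:hhstring} gives that $|FH_1|-|FH|$ is a constant $c$ independent of the point $F$ of the focal ellipse $C_3$. Hence $|PF|+|FH_1|=|PF|+|FH|+c$, so minimizing the length of the broken line $PFH_1$ over $F\in C_3$ is the same as minimizing $|PF|+|FH|$. By the triangle inequality $|PF|+|FH|\geq |PH|$, with equality exactly when $F$ lies on the segment $PH$; because $E$ separates $P$ and $H$ and $E\in C_3$, this minimal value $|PH|$ is attained at $F=E$, which is the assertion.

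The remaining two cases are variations on the same idea. When $H$ separates $P$ and $E$, I would instead invoke the ellipse version of Lemma \ref{lem:hhstring}: for $F$ running over the branch of the focal hyperbola containing $H$, the quantity $|FE_1|-|FE|$ is a constant $c'$, so $|PF|+|FE_1|=|PF|+|FE|+c'$ and the triangle inequality forces the minimum $|PE|$ to be reached at $F=H$, since $H$ lies on the segment $PE$. For the last case, where $P$ separates $E$ and $H$, the same substitution $|FH_1|=|FH|+c$ turns the claim into $|PF|-|FH|\geq |PE|-|EH|$; here the reverse triangle inequality $|PF|-|FH|\geq -|PH|$ applies, with equality when $P$ lies on the segment $FH$, and since $P$ separates $E$ and $H$ the choice $F=E$ realizes the bound $-|PH|=|PE|-|EH|$, yielding the stated inequality $|PE|-|EH_1|\leq |PF|-|FH_1|$.

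The main point requiring care is bookkeeping rather than depth: one must match the correct conic-and-branch version of Lemma \ref{lem:hhstring} to each case, keep track of the sign of the additive constant, and verify that the separation hypothesis really places the relevant intersection point inside the appropriate closed segment so that equality in the (reverse) triangle inequality is achievable there. It is worth emphasizing that, unlike the Staude-type statements, this lemma does not need the variational reflection property of Lemma \ref{lem:fastenedstring}: the collinearity of $P$, $E$, $H$ pins the optimizer down directly, and the role of Lemma \ref{lem:hhstring} is solely to replace the free endpoint by a fixed one up to a constant.
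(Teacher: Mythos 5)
Your argument is correct and is essentially the paper's own proof: both use Lemma \ref{lem:hhstring} to replace the free endpoint $H_1$ (resp.\ $E_1$) by the fixed point $H$ (resp.\ $E$) up to an additive constant, and then conclude from the (reverse) triangle inequality together with the collinearity of $P$, $E$, $H$ on $l$. Your write-up merely makes explicit the role of the triangle inequality and the attainment of equality at the separating point, which the paper leaves implicit.
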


\begin{proof}
$|PE|+|EH|\leq |PF|+|FH|$ for all points of $C_3$. $|HE|-|EH_1|=|HF|-|FH_1|$ implying that
$$
|PE|+|EH_1|=|PE|+|EH|-(|EH|-|EH_1|)\leq |PF|+|FH|-(|FH|-|FH_1|)=|PF|+|FH_1|,
$$
as we stated. The alternative statement in the brackets can be get with the same manner.

If $P$ separates $E$ and $H$ then $|EH|-|PE|\geq |FH|-|FP|$ and again $|HE|-|EH_1|=|HF|-|FH_1|$, hence $|PE|-|EH_1|\leq |PF|-|FH_1|$ as we stated.
\end{proof}

Now we are ready to prove Staude's result on wire construction.

\begin{theorem}\label{thm:staude}
Let $P$ be a point of the ellipsoid $\mathcal{E}$ given by the canonical form
$$
\frac{x^2}{a^2}+\frac{y^2}{b^2}+\frac{z^2}{c^2}=1.
$$
Denote by $F_1=(-\sqrt{a^2-c^2}, 0, 0)^T$ the left focus of the focal hyperbola of the confocal quadrics defined by the given ellipsoid and $G_2=(\sqrt{a^2-b^2},0,0)^T$ is the right focus of the focal ellipse of the same system. Then the sum of the shortest length of the broken line $F_1HPEG_2$ where $H$ is a point of the focal hyperbola and $E$ is a point of the focal ellipse is equal to $2a+\sqrt{a^2-c^2}-\sqrt{a^2-b^2}$ hence it is independent from the position of $P$ on the ellipsoid.
\end{theorem}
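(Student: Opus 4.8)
My plan is to reduce the optimization to two independent focal-distance problems, locate their minimizers with the common transversals, prove that the total length is stationary (hence constant) as $P$ ranges over the ellipsoid, and finally read off the value at a vertex.

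First I would decouple the problem. Since $P$ is fixed, the length $|F_1H|+|HP|+|PE|+|EG_2|$ splits as $\bigl(|HP|+|HF_1|\bigr)+\bigl(|PE|+|EG_2|\bigr)$, so the minimization over $H\in C_2$ and $E\in C_3$ separates into $V_H(P)=\min_H(|HP|+|HF_1|)$ and $V_E(P)=\min_E(|PE|+|EG_2|)$. The decisive observation is that each chosen focus lies on the \emph{other} focal conic: $F_1=(-\sqrt{a^2-c^2},0,0)$ is the left vertex of the focal ellipse $C_3$, while $G_2=(\sqrt{a^2-b^2},0,0)$ is the right vertex of the focal hyperbola $C_2$. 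Thus each minimization is exactly of the type treated in Lemma \ref{lem:roleofcommonedges}: the minimizer $E_0$ of $V_E$ is the point where the common transversal $l_E$ through $P$ (the one whose $C_2$-point lies on the branch of $G_2$) meets $C_3$, and the minimizer $H_0$ of $V_H$ is where the common transversal $l_H$ through $P$ (matched to the branch of $F_1$) meets $C_2$. In particular $PE_0$ runs along the common edge $l_E$ and $PH_0$ along $l_H$.

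Next I would prove that $T(P)=V_H(P)+V_E(P)$ is constant on the connected surface $\mathcal{E}$ by showing its tangential derivative vanishes. By the envelope principle (the optimal $E_0,H_0$ are critical in their own variables), the tangential gradient of $T$ equals $\tfrac{P-E_0}{|P-E_0|}+\tfrac{P-H_0}{|P-H_0|}$, two unit vectors pointing from the focal conics toward $P$ along $l_E,l_H$; by the very computation used in the proof of Lemma \ref{lem:fastenedstring}, this derivative is tangential-null precisely when the sum is parallel to the normal $n(P)$, i.e.\ when $n(P)$ bisects the angle $\angle H_0PE_0$ (the bisection asserted in Thompson's recension). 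To obtain this I would use that the two focal cones with apex $P$ form a confocal system of cones whose common axes are the normals at $P$ of the confocals through $P$ (Statement \ref{st:onthetangentcone} with the central-section computation, and Statement \ref{st:axesoffocalcones}); one of these axes is $n(P)$ itself. The four common edges are permuted by the reflections in the three axis-planes, so in the axis frame their directions read $(\pm\alpha,\pm\beta,\pm\gamma)$, and the branch/separation conditions of Lemma \ref{lem:roleofcommonedges} select the pair whose directions differ only in the sign of the $n(P)$-component; for that pair the gradient above is parallel to $n(P)$, so $T$ is constant.

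With constancy established I would evaluate $T$ at the vertex $P=(a,0,0)$, where everything straightens: the segment $PG_2$ meets $C_3$ at its vertex $(\sqrt{a^2-c^2},0,0)$ and the segment $PF_1$ meets $C_2$ at its vertex $(\sqrt{a^2-b^2},0,0)$, so by the triangle inequality $V_E=|PG_2|=a-\sqrt{a^2-b^2}$ and $V_H=|PF_1|=a+\sqrt{a^2-c^2}$, giving $T=2a+\sqrt{a^2-c^2}-\sqrt{a^2-b^2}$. I expect the main obstacle to be the bisection step: verifying rigorously that the branch-matched edges $l_E,l_H$ form the pair symmetric about $n(P)$, and justifying the envelope differentiation together with the degenerate branch cases. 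A more computational alternative would bypass the symmetry argument by applying Lemma \ref{lem:hhstring} to rewrite $|H_0F_1|$ and $|E_0G_2|$ as focal radii to the opposite foci, and Statement \ref{st:lengthoftheintercepts} (the intercept of a common edge equals the major semi-axis $a$) to handle the rectilinear portions; the $P$-dependent confocal-parameter terms should then cancel and reproduce the same constant.
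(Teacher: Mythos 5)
Your skeleton is sound and genuinely different from the paper's argument. The decoupling into $V_H(P)+V_E(P)$, the observation that $F_1$ and $G_2$ are vertices of the \emph{opposite} focal conics so that Lemma \ref{lem:roleofcommonedges} places the minimizers $E_0,H_0$ on common transversals through $P$, and the evaluation at the vertex $P=(a,0,0)$ (where both optimal broken lines straighten along the major axis) are all correct. Where you differ from the paper is the constancy step: the paper never differentiates, but instead computes $|PE|$, $|EG_2|$, $|PH|$, $|HF_1|$ in closed form from the parametrization (\ref{eqn:commonedges})--(\ref{eq:onthedistancehyp}) and verifies directly that the confocal parameters $\mu,\nu$ and the auxiliary parameters $e,f$ cancel in the sum. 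Your envelope-plus-special-point route would be shorter \emph{if} the bisection step were secured.

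That step is where the genuine gap lies, and your formulation of it is not correct as written. With $d_E,d_H$ the unit vectors from $P$ \emph{toward} $E_0$ and $H_0$, the envelope theorem gives $\nabla T=-(d_E+d_H)$, and constancy on $\mathcal{E}$ requires $d_E+d_H$ to be parallel to $n(P)$; since the common edges have directions $(\pm\alpha,\pm\beta,\pm\gamma)$ in the axis frame, this forces $d_E$ and $d_H$ to have the \emph{same} sign in the $n(P)$-component and \emph{opposite} signs in both tangential components. A pair of rays ``differing only in the sign of the $n(P)$-component,'' as you select, gives $d_E+d_H$ \emph{perpendicular} to $n(P)$, i.e.\ a nonvanishing tangential gradient -- the opposite of what is needed (the condition you want is the reflection of the ray in the normal \emph{line}, not in the tangent hyperplane). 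More substantively, proving that the branch- and separation-matched edges of Lemma \ref{lem:roleofcommonedges} (matched to the branch of $G_2$, the half-space of $F_1$, and the order of $P,E,H$ on the transversal) form precisely that symmetric pair is the entire content of the paper's sign analysis of the four focal radii $r_1,\ldots,r_4$; you defer it as ``the main obstacle,'' but without it neither the bisection nor the constancy is established. The envelope differentiation also needs the minimizer to remain on one fixed edge and branch as $P$ varies, which again rests on that same case analysis.
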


\begin{proof}
To prove this statement use first Lemma \ref{lem:roleofcommonedges} to calculate that distances which are needed for us. The common transversals of the focal conics have the parametric representation
\begin{eqnarray}\label{eqn:commonedges}
\nonumber \xi &= & \varepsilon(\xi) t\sqrt{\frac{b^2c^2}{\mu\nu}} \\
 \eta &= & \varepsilon(\eta) t\sqrt{\frac{(b^2-\mu)(\mu-c^2)}{\mu(\nu-\mu)}} \\
\nonumber \zeta &= & \varepsilon(\zeta) t\sqrt{\frac{(\nu-b^2)(\nu-c^2)}{\nu(\nu-\mu)}}
\end{eqnarray}
with respect to the dual coordinate system with origin $P$ and having the normals of the confocals through $P$ as axes. Here $a^1_1=a$, $a^1_2=b$, $a^1_3=c$; $(a^2_1)^2=a^2-\mu$, $(a^2_2)^2=b^2-\mu$, $(a^2_3)^2=c^2-\mu$; and $(a^3_1)^2=a^2-\nu$, $(a^3_2)^2=b^2-\nu$, $(a^3_3)^2=c^2-\nu$ with $c^2<\mu <b^2<\nu<a^2$. One of the signs $\varepsilon(\xi), \varepsilon(\eta), \varepsilon(\zeta)$ can be chosen arbitrarily.  The coordinate transformation which connects the variable $\xi,\eta,\zeta$ to the original variable $x,y,z$ is
\begin{eqnarray}\label{eqn:transformation}
\nonumber x-x'& = & \left(\frac{p_0}{a^2}\xi+\frac{p_\mu}{a^2-\mu}\eta +\frac{p_\nu}{a^2-\nu}\zeta\right)x' \\
 y-y'& = & \left(\frac{p_0}{b^2}\xi+\frac{p_\mu}{b^2-\mu}\eta +\frac{p_\nu}{b^2-\nu}\zeta\right)y' \\
\nonumber z-z'& = & \left(\frac{p_0}{c^2}\xi+\frac{p_\mu}{c^2-\mu}\eta +\frac{p_\nu}{c^2-\nu}\zeta\right)z',
\end{eqnarray}
where the coordinates of the point $P$ with respect to the original ($\{O,x,y,z\}$) system are $x',y'$ and $z'$, respectively and by (\ref{eq:onpj})
\begin{equation}\label{eq:thepvalues}
p_0:=\sqrt{\frac{a^2b^2c^2}{\mu\nu}},\, p_\mu=\sqrt{\frac{(a^2-\mu)(b^2-\mu)(\mu-c^2)}{\mu(\nu-\mu)}},\, p_\nu=\sqrt{\frac{(a^2-\nu)(\nu-b^2)(\nu-c^2)}{\nu(\nu-\mu)}}.
\end{equation}
Hence the common edges with respect to the system $\{O,x,y,z\}$ have the following form:
\begin{eqnarray}\label{eqn:transformedcommedge}
\nonumber x-x'& = & t\left(\varepsilon(\xi)\frac{b^2c^2}{a\mu\nu}-\varepsilon(\eta) \frac{(b^2-\mu)(c^2-\mu)}{\sqrt{a^2-\mu}\mu(\nu-\mu)}+\varepsilon(\zeta) \frac{(b^2-\nu)(c^2-\nu)}{\sqrt{a^2-\nu}\nu(\nu-\mu)}\right)x' \\
 y-y'& = & t\left(\varepsilon(\xi)\frac{ac^2}{\mu\nu}-\varepsilon(\eta) \frac{\sqrt{a^2-\mu}(c^2-\mu)}{\mu(\nu-\mu)} +\varepsilon(\zeta) \frac{\sqrt{a^2-\nu}(c^2-\nu)}{\nu(\nu-\mu)}\right)y' \\
\nonumber z-z'& = & t\left(\varepsilon(\xi)\frac{ab^2}{\mu\nu}-\varepsilon(\eta) \frac{\sqrt{a^2-\mu}(b^2-\mu)}{\mu(\nu-\mu)}+\varepsilon(\zeta) \frac{\sqrt{a^2-\nu}(b^2-\nu)}{\nu(\nu-\mu)}\right)z'.
\end{eqnarray}
The focal ellipse lying on the plane $z=0$ implying that the corresponding possible parameters are
$$
t=\frac{-1}{\left(\varepsilon(\xi)\frac{ab^2}{\mu\nu}-\varepsilon(\eta) \frac{\sqrt{a^2-\mu}(b^2-\mu)}{\mu(\nu-\mu)} +\varepsilon(\zeta) \frac{\sqrt{a^2-\nu}(b^2-\nu)}{\nu(\nu-\mu)}\right)}=
$$
$$
=\frac{\mu\nu(\nu-\mu)}{-\varepsilon(\xi) ab^2(\nu-\mu)+\varepsilon(\eta) \sqrt{a^2-\mu}(b^2-\mu)\nu -\varepsilon(\zeta)\sqrt{a^2-\nu}(b^2-\nu)\mu},
$$
which square gives the squared distance of $P$ to the corresponding point $E$ of the transversal by equation (\ref{eq:squareddistance}).
From this we get
$$
 x=\frac{(a^2-c^2)\left(-\varepsilon(\xi)\frac{b^2
(\nu-\mu)}{a}+\varepsilon(\eta) \frac{\nu(b^2-\mu)}{\sqrt{a^2-\mu}}-\varepsilon(\zeta) \frac{\mu(b^2-\nu)}{\sqrt{a^2-\nu}}\right)}{-\varepsilon(\xi) ab^2(\nu-\mu)+\varepsilon(\eta) \sqrt{a^2-\mu}(b^2-\mu)\nu -\varepsilon(\zeta)\sqrt{a^2-\nu}(b^2-\nu)\mu}x'=
$$
$$
=\frac{(a^2-c^2)\left(-\varepsilon(\xi)\frac{b^2
(\nu-\mu)}{a}+\varepsilon(\eta) \frac{\nu(b^2-\mu)}{\sqrt{a^2-\mu}}-\varepsilon(\zeta) \frac{\mu(b^2-\nu)}{\sqrt{a^2-\nu}}\right)}{-\varepsilon(\xi) ab^2(\nu-\mu)+\varepsilon(\eta) \sqrt{a^2-\mu}(b^2-\mu)\nu -\varepsilon(\zeta)\sqrt{a^2-\nu}(b^2-\nu)\mu}\varepsilon(x')\frac{a\sqrt{a^2-\mu}\sqrt{a^2-\nu}}{\sqrt{a^2-b^2}\sqrt{a^2-c^2}}=
$$
$$
=\varepsilon(x')\frac{\sqrt{a^2-c^2}}{\sqrt{a^2-b^2}}\frac{\left( -\varepsilon(\xi) b^2
(\nu-\mu)\sqrt{a^2-\mu}\sqrt{a^2-\nu}+\varepsilon(\eta) \nu(b^2-\mu)a\sqrt{a^2-\nu} -\varepsilon(\zeta)\mu(b^2-\nu)a\sqrt{a^2-\mu}\right)}{\left(-\varepsilon(\xi) ab^2(\nu-\mu)+\varepsilon(\eta) \sqrt{a^2-\mu}(b^2-\mu)\nu -\varepsilon(\zeta)\sqrt{a^2-\nu}(b^2-\nu)\mu\right)},
$$
where $\varepsilon(x')$ is the sign of $x'$ and we used here equation (\ref{eq:coordwithconfoc}), too. For brevity denote by the long fraction by $B/A$ meaning that
$$
x:=\varepsilon(x')\frac{\sqrt{a^2-c^2}}{\sqrt{a^2-b^2}}\frac{B}{A}.
$$
Applying in the plane $z=0$ the equation (\ref{eq:coordwithconfoc}) for the focal ellipse we get that there is a constant $e$ for which $b^2\leq e^2\leq a^2$ and
$$
x=\varepsilon((C_3)_x)\frac{\sqrt{a^2-c^2}\sqrt{a^2-e^2}}{\sqrt{a^2-b^2}} \mbox{ and } y=\varepsilon((C_3)_y)\frac{\sqrt{b^2-c^2}\sqrt{e^2-b^2}}{\sqrt{a^2-b^2}}
$$
where $\varepsilon((C_3)_x)$ is the sign of the coordinate $x$. This implies the equality:
$$
\varepsilon(x')\varepsilon((C_3)_x)\sqrt{a^2-e^2}=\frac{B}{A}.
$$
On the other hand we have the identity
$$
\varepsilon(\xi)\varepsilon(\eta)\varepsilon(\zeta)B+(\varepsilon(\xi) a+\varepsilon(\eta)\sqrt{a^2-\mu}+\varepsilon(\zeta)\sqrt{a^2-\nu})A=
$$
$$
=-\varepsilon(\eta)\varepsilon(\zeta)b^2(\nu-\mu)\sqrt{a^2-\mu}\sqrt{a^2-\nu}+\varepsilon(\xi)\varepsilon(\zeta) \nu(b^2-\mu)a\sqrt{a^2-\nu}-\varepsilon(\xi)\varepsilon(\eta) \mu(b^2-\nu)a\sqrt{a^2-\mu}+
$$
$$
+(\varepsilon(\xi) a+\varepsilon(\eta)\sqrt{a^2-\mu}+\varepsilon(\zeta)\sqrt{a^2-\nu})(-\varepsilon(\xi) ab^2(\nu-\mu)+\varepsilon(\eta) \sqrt{a^2-\mu}(b^2-\mu)\nu -\varepsilon(\zeta)\sqrt{a^2-\nu}(b^2-\nu)\mu)=
$$
$$
=\varepsilon(\eta)\varepsilon(\zeta)\left(-b^2(\nu-\mu)-(b^2-\nu)\mu+(b^2-\mu)\nu\right)\sqrt{a^2-\mu}\sqrt{a^2-\nu}+
$$
$$
+\varepsilon(\xi)\varepsilon(\zeta) \left(\nu(b^2-\mu)-(b^2-\nu)\mu-b^2(\nu-\mu)\right)a\sqrt{a^2-\nu} +
$$
$$
+\varepsilon(\xi)\varepsilon(\eta) \left(-\mu(b^2-\nu) - b^2(\nu-\mu)+(b^2-\mu)\nu\right)a\sqrt{a^2-\mu}+
$$
$$
-a^2b^2(\nu-\mu)+ (a^2-\mu)(b^2-\mu)\nu -(a^2-\nu)(b^2-\nu)\mu=\mu^2\nu-\nu^2\mu=-\mu\nu(\nu-\mu),
$$
implying the result:
\begin{equation}\label{eq:onthedistanceellipse}
t=\frac{\mu\nu(\nu-\mu)}{A}=-\varepsilon(\xi)a-\varepsilon(\eta)\sqrt{a^2-\mu}-\varepsilon(\zeta)\sqrt{a^2-\nu}- \varepsilon(\xi)\varepsilon(\eta)\varepsilon(\zeta)\varepsilon(x')\varepsilon((C_3)_x)\sqrt{a^2-e^2}.
\end{equation}

Similarly the focal hyperbola is lying on the plane $y=0$. Hence we have in this case that
$$
\tau=\frac{\mu\nu(\nu-\mu)}{-\varepsilon(\xi)ac^2(\nu-\mu)+\varepsilon(\eta) \sqrt{a^2-\mu}(c^2-\mu)\nu -\varepsilon(\zeta)\sqrt{a^2-\nu}(c^2-\nu)\mu},
$$
and also
$$
\bar{x}=\varepsilon(x')\frac{\sqrt{a^2-b^2}}{\sqrt{a^2-c^2}}\cdot
$$
$$
\cdot\frac{-\varepsilon(\xi)c^2
(\nu-\mu)\sqrt{a^2-\mu}\sqrt{a^2-\nu}+\varepsilon(\eta) \nu(c^2-\mu)a\sqrt{a^2-\nu}-\varepsilon(\zeta) \mu(c^2-\nu)a\sqrt{a^2-\mu}}{-\varepsilon(\xi)ac^2(\nu-\mu)+\varepsilon(\eta)\sqrt{a^2-\mu}(c^2-\mu)\nu -\varepsilon(\zeta)\sqrt{a^2-\nu}(c^2-\nu)\mu}
=
$$
$$
=\varepsilon(x')\frac{\sqrt{a^2-b^2}}{\sqrt{a^2-c^2}}\frac{D}{C}
$$
For the coordinates of the focal hyperbola by (\ref{eq:coordwithconfoc}) hold that
$$
\bar{x}=\varepsilon((C_2)_x)\frac{\sqrt{a^2-b^2}\sqrt{a^2-f^2}}{\sqrt{a^2-c^2}},\, y=0 \mbox{ and } z=\varepsilon((C_2)_z)\frac{\sqrt{b^2-c^2}\sqrt{f^2-c^2}}{\sqrt{a^2-c^2}},
$$
hence we have
$$
\varepsilon(x')\varepsilon((C_2)_{\bar{x}})\sqrt{a^2-f^2}=\frac{D}{C}.
$$
 Clearly the  identity
$$
\varepsilon(\xi)\varepsilon(\eta)\varepsilon(\zeta)D+(\varepsilon(\xi)a+\varepsilon(\eta)\sqrt{a^2-\mu}+\varepsilon(\zeta)\sqrt{a^2-\nu})C=-\mu\nu(\nu-\mu)
$$
is also hold, and thus
\begin{equation}\label{eq:onthedistancehyp}
\tau=\frac{\mu\nu(\nu-\mu)}{C}=-\varepsilon(\xi)a-\varepsilon(\eta)\sqrt{a^2-\mu}-\varepsilon(\zeta)\sqrt{a^2-\nu}- \varepsilon(\xi)\varepsilon(\eta)\varepsilon(\zeta)\varepsilon(x')\varepsilon((C_2)_x)\sqrt{a^2-f^2}.
\end{equation}
For a given position of the point $P$ choose that four combinations for the eight possible ones that the four intersection points on the focal ellipse correspond to positive parameter values, respectively. It is easy to see\footnote{We have to prove that the equality $ab^2(\nu-\mu)>\sqrt{a^2-\mu}(b^2-\mu)\nu+\sqrt{a^2-\nu}(b^2-\nu)\mu$ fulfill if $c^2<\mu<b^2<\nu<a^2$ hold for the corresponding parameters.}  that in this case we have to choose the following combination of signs:
$$
\begin{array}{|c|c|c|c|}
\hline
&\varepsilon(\xi) & \varepsilon(\eta) & \varepsilon(\zeta) \\
\hline
\hline
r_1 & -1 & 1 & 1 \\
\hline
r_2 &-1 & 1 & -1 \\
\hline
r_3 & -1 & -1 & -1 \\
\hline
r_4 &-1 & -1 & 1 \\
\hline
\end{array},
$$
The corresponding common transversals we call the focal radiuses $r_i$ of the point $P$ to the focal conics.

In this case, we can compare the positions of the points $E$ and $H$ on a focal radius by the parameter values $t$ and $\tau$. If $\tau > t>0$ then the point $E$ separates the point $P$ and $H$ and if $t>\tau$ then either $H$ separates $P$ and $E$ or $P$ separates $E$ and $H$, respectively.
Consider the difference
$$
\frac{1}{\tau}-\frac{1}{t}=-\frac{b^2-c^2}{\mu\nu(\nu-\mu)}\left(-\varepsilon(\xi)a(\nu-\mu)+\varepsilon(\eta) \sqrt{a^2-\mu}\nu -\varepsilon(\zeta)\sqrt{a^2-\nu}\mu\right)=
$$
$$
=\frac{b^2-c^2}{\mu\nu(\nu-\mu)}(\varepsilon(\eta)\sqrt{a^2-\mu}-\varepsilon(\zeta)\sqrt{a^2-\nu}) (\varepsilon(\zeta)\sqrt{a^2-\nu}-\varepsilon(\xi)a)(\varepsilon(\xi)a-\varepsilon(\eta)\sqrt{a^2-\mu}).
$$
This value is positive for $r_3$ and $r_4$ and negative for $r_1$ and $r_2$, respectively. This implies that for $i=1,2$ we have two possibilities. If $\tau> o$ then the focal radius $r_i$ first intersect the focal ellipse and than the focal hyperbola and if $\tau<0$ then $P$ separates the two intersection points to each other. In the case of $i=3,4$ $\tau$ is always positive the two points of intersections are on the same radius and the point of the focal hyperbola separates the point $P$ and the point of the focal ellipse. For $i=3$ the numerator $D$ of $r_3$ is negative and the same value for $r_4$ is positive. Since the denominator $C$ is positive in both of these cases, the corresponding points of the focal hyperbola have opposite half-spaces with respect to the $yz$-plane. Hence in the case of $r_3$ $\varepsilon((C_2)_x)=-\varepsilon(x')$ and in the case of $r_4$ $\varepsilon((C_2)_x)=\varepsilon(x')$, respectively. Since for $r_1$ $D$ is positive and for $r_2$ $D$ is negative then the sign dependent from the relative position of the point $P$ and the points of the focal hyperbola. For $r_1$ with respect to the fact that $\tau$ is positive or negative, $P$ has to the same or the opposite half-space as the point of the focal hyperbola. Conversely, for $r_2$ the point of the focal hyperbola lies in the opposite or same half-space as $P$ with respect to the fact that $tau$ positive or negative, respectively. By formula we have that in the case of $r_1$ $\varepsilon((C_2)_x)=\varepsilon(x')\tau$ and in the case of $r_2$ $\varepsilon((C_2)_x)=-\varepsilon(x')\tau$.

Hence the broken line $PEG_2$ can be realised with focal radii $r_1$ or $r_2$ meaning that either
$$
|PE|=t=a-\sqrt{a^2-\mu}-\sqrt{a^2-\nu}+\varepsilon(x')\varepsilon((C_3)_x)\sqrt{a^2-e^2} \mbox{ in the case of } r_1
$$
or
$$
|PE|=t=a-\sqrt{a^2-\mu}+\sqrt{a^2-\nu}-\varepsilon(x')\varepsilon((C_3)_x)\sqrt{a^2-e^2} \mbox{ in the case of } r_2,
$$
respectively. For the distance of $E$ and $G_2$ we have
$$
(EG_2)^2=\left(\rho\frac{\sqrt{a^2-c^2}\sqrt{a^2-e^2}}{\sqrt{a^2-b^2}}-\sqrt{a^2-b^2}\right)^2+ \left(\rho\frac{\sqrt{b^2-c^2}\sqrt{e^2-b^2}}{\sqrt{a^2-b^2}}\right)^2=
$$
$$
=(-e^2)+(a^2-b^2)+(a^2+b^2)+(-c^2)-2\rho\sqrt{(a^2-c^2)(a^2-e^2)}=
$$
$$
=\left(\sqrt{a^2-c^2}-\rho\sqrt{a^2-e^2}\right)^2,
$$
where $\rho $ is $+1$ if $E$ and $G_2$ are in the same half-space with respect to the plane $yz$ and $-1$ in the other case. From this we get that with respect to the fact that $G_2$ and $P$ are the same or opposite half-spaces we get the optimal polygonal lengths:
$$
|PE|+|EG_2|=
a-\sqrt{a^2-\mu}-\varepsilon \sqrt{a^2-\nu}+\sqrt{a^2-c^2},
$$
where $\varepsilon$ is positive or negative with respect to that $\varepsilon(x')$ positive or negative, respectively.

Similarly, the broken line $PHF_1$ can be realised in the cases when we have either consider $r_3$ with
$$
|PH|=\tau=a+\sqrt{a^2-\mu}+\sqrt{a^2-\nu}-\sqrt{a^2-f^2}
$$
or $r_4$ with
$$
|PH|=\tau=a+\sqrt{a^2-\mu}-\sqrt{a^2-\nu}-\sqrt{a^2-f^2},
$$
respectively.

Since we have that
$$
|HF_1|=\sqrt{\left(\varepsilon((C_2)_{\bar{x}})\frac{\sqrt{a^2-b^2}\sqrt{a^2-f^2}}{\sqrt{a^2-c^2}}+\sqrt{a^2-c^2}\right)^2+ (\varepsilon((C_2)_z)\frac{\sqrt{b^2-c^2}\sqrt{f^2-b^2}}{\sqrt{a^2-c^2}})^2}=
$$
$$
=\sqrt{a^2-b^2}+\varepsilon((C_2)_{\bar{x}})\sqrt{a^2-f^2},
$$
and we get for the polygonal length of $PHF_1$
$$
|PH|+|HF_1|= a+\sqrt{a^2-\mu}+\varepsilon\sqrt{a^2-\nu}+\sqrt{a^2-b^2}
$$
if $H$ and $F_1$ corresponding to distinct branches of the focal hyperbola and $\varepsilon$ is positive or negative with respect to that $P$ and $F_1$ are on the same or opposite halfspaces of the $yz$ plane.
If $H$ and $F_1$ corresponding to the same branch of the focal hyperbola we have
$$
|PH|+|HF_1|=a+\sqrt{a^2-\mu}-\varepsilon \sqrt{a^2-\nu}-\sqrt{a^2-b^2},
$$
where again $\varepsilon=1$ if and only if $P$ and $F_1$ are on the same half-space of the $yz$ plane.

Comparing the above results with the assumption of the statement we get that the optimal polygonal length is
$$
|PE|+|EG_2|+|PH|+|HF_1|=
2a+\sqrt{a^2-c^2}-\sqrt{a^2-b^2},
$$
as we stated.
\end{proof}

\end{document}